\numberwithin{equation}{section}
\newcommand{\E}{\mathbb{E}}
\newcommand{\R}{\mathbb{R}}
\newcommand{\bbP}{\mathbb{P}}
\newcommand{\cL}{\mathcal{L}}
\newtheorem{theorem}{Theorem}[section]
\newtheorem{lemma}{Lemma}[section]
\newtheorem{remark}{Remark}[section]
\newtheorem{proposition}{Proposition}[section]
\newtheorem{assumption}{Assumption}[section]
\numberwithin{equation}{section}
\begin{document}

\title{Convergence of Random Batch Method for interacting particles with disparate species and weights}

\author[1]{Shi Jin \thanks{shijin-m@sjtu.edu.cn}}
\author[2]{Lei Li\thanks{leili2010@sjtu.edu.cn}}
\author[3]{Jian-Guo Liu\thanks{jliu@phy.duke.edu}}
\affil[1,2]{School of Mathematical Sciences, Institute of Natural Sciences, MOE-LSC, Shanghai Jiao Tong University, Shanghai, 200240, P. R. China.}
\affil[3]{Department of Mathematics and Department of Physics, Duke University, Durham, NC 27708, USA.}

\date{}
\maketitle

\begin{abstract}
We consider in this work the convergence of Random Batch Method proposed in our previous work [Jin et al., J. Comput. Phys., 400(1), 2020]  for interacting particles to the case of disparate species and weights. We show that the strong error is of $O(\sqrt{\tau})$ while the weak error is of $O(\tau)$ where $\tau$ is the time step between two random divisions of batches. Both types of convergence are uniform in $N$, the number of particles. The proof of strong convergence follows closely the proof in  [Jin et al., J. Comput. Phys., 400(1), 2020]  for indistinguishable particles, but there are still some differences: since there is no exchangeability now, we have to use a certain weighted average of the errors; some refined auxiliary lemmas have to be proved compared with our previous work. To show that the weak convergence of empirical measure is uniform in $N$, certain sharp estimates for the derivatives of the backward equations have been used. The weak convergence analysis is also illustrating for the convergence of Random Batch Method for $N$-body Liouville equations. 
\end{abstract}

\section{Introduction}

Interacting particle systems are ubiquitous in natural, for example, molecules in fluids \cite{frenkel2001understanding}, plasma \cite{birdsall2004}, galaxy in universe. In addition, many collective behaviors in natural and social sciences are due to interacting individuals, and examples include swarming \cite{vicsek1995novel,carrillo2017particle,carlen2013kinetic,degond2017coagulation}, flocking  \cite{cucker2007emergent,hasimple2009,albi2013}, and 
chemotaxis \cite{horstmann03,bertozzi12} and consensus clusters in opinion dynamics \cite{motsch2014}. In many models for these phenomenon, individual particles can have weights. For example, in the point vortex model \cite{chorin1973,gustafson1991,liu2001convergence}, the ``particles" correspond to different point vortices with different strength and they interact each other through a Hamiltonian system. To approximate the nonlinear Fokker-Planck equation for interacting particles, like the Keller-Segel equation, one can use interacting particles with different masses to approximate the dynamics and then compute the empirical density more efficiently  \cite{craig16,liu2017random}. There may also be several species, where the particles may have different features; for example,  in the microscopic description of the Poisson-Boltzmann equation, particles with different charges interact with each other through Coulomb forces \cite{allaire2013asymptotic,jin2020direct}.

The systems mentioned above can be written as the second order ODE/SDE system for $1\le i\le N$
\begin{gather}\label{eq:sdeeq1}
\begin{split}
&dX^i= V^i\,dt\\
& dV^i=\frac{1}{N-1}\sum_{j=1, j\neq i}^N q_j F(X^i-X^j)\,dt-\gamma V^i\,dt+\sigma\, dW^i
\end{split}
\end{gather}
or the first order ODE/SDE system
\begin{gather}\label{eq:sdeeq}
dX^i=\frac{1}{N-1}\sum_{j=1: j\neq i}^N q_j F(X^i-X^j)\,dt+\sigma\, dW^i,~~i=1,\cdots, N,
\end{gather}
where $q_j$'s are the weights (they can be mass, charges etc). Note that \eqref{eq:sdeeq1} contains the Hamiltonian case when the force $F$ is conservative and $\gamma=\sigma=0$.
If one discretizes \eqref{eq:sdeeq1} or \eqref{eq:sdeeq} directly, the computational cost per time step is $O(N^2)$, which is undesired for large $N$. In the case of fast enough decaying interactions, the Fast Multipole Method (FMM) \cite{rokhlin1985rapid} can reduce the complexity per iteration to $O(N)$. However, the implementation is not easy, as it needs some advanced data structures.  

The authors proposed in \cite{jin2020random}  a simple random algorithm, called the Random Batch Method (RBM), to reduce the computation cost per time step from $O(N^2)$ to $O(N)$ for indistinguishable particles (thus with the same weight). The idea was to apply the ``mini-batch" idea, famous for its application in the so-called stochastic gradient descent (SGD) \cite{robbins1951,bottou1998online,bubeck2015convex} in machine learning. Later on, the ``mini-batch" was used to develop a Markov Chain Monte Carlo method,  called the stochastic gradient Langevin dynamics (SGLD), by Welling and Teh for Bayesian inference  \cite{welling2011bayesian}. The random binary collisions were also used to simulate Boltzmann equation \cite{bird1963approach,nanbu1980direct,babovsky1989convergence} or the mean-field equation for flocking \cite{albi2013}.  How to design the mini-batch strategy depends on the specific applications. For interacting particle systems, the strategy in \cite{jin2020random} is to do random grouping at each time interval and then let the particles interact within the groups on that small time interval. Compared with FMM, the accuracy is lower (halfth order in time step), but RBM is simpler to implement and is valid for more general potentials (e.g. the SVGD ODE \cite{liu2016stein,li2019stochastic}). 
Intuitively, the method converges because there is time average in time, and thus the convergence is like that in the Law of Large Number, but in time (see \cite{jin2020random}  for more detailed explanation). Moreover, the RBM converges in the regime of mean field limit (\cite{stanley1971, georges1996, lasry2007}). In fact, the method is asymptotic-preserving regarding the mean-field limit, which means the algorithm can approximate the one-marginal distribution with error bound independent of $N$.

The proof in \cite{jin2020random} depends on the exchangeability of the particles. If the particles carry weights, they are not exchangeable. 
Even so, it is clear that RBM can be equally well applied for interacting particles in multispecies and with weights  (see Algorithm \ref{rbm} below). The goal of this paper is to analyze the convergence of RBM for interacting particles with weights or multispecies. We will discuss both strong and weak convergences, which rely on different techniques so that the error estimates are independent of $N$.  The proof of strong convergence follows closely the proof in \cite{jin2020random} for indistinguishable particles, but there are still some differences: since there is no exchangeability, we have to use a kind of weighted average of the errors; some refined auxiliary lemmas have to be used here, compared with those in \cite{jin2020random}. The weak convergence is for the empirical measures instead of the one marginal distribution as in \cite{jin2020random}. Some sharp estimates for the derivatives of the backward equations have to be done so that the weak convergence is  uniform in $N$. The weak convergence analysis is also illustrating for the convergence of Random Batch Method for $N$-body Liouville equations. 

The rest of the paper is organized as follows. In section \ref{sec:setup},
we introduce some basic notations and give a detailed description of the Random Batch Method. Section \ref{sec:strong} is devoted to the proof of strong convergence. In section \ref{sec:weak}, we show the weak first order convergence, where a key proposition for the estimates of derivatives of the backward equation  is needed.

\section{Setup and notations}\label{sec:setup}

We consider interacting particles with weights. Since in practice, there
can be external field, and also there are some applications where the interaction kernels depend on the two specific particles (for example, the SVGD ODE \cite{li2019stochastic}), we consider in general the following first order system 
\begin{gather}\label{eq:Nparticlesys}
dX^i=b(X^i)\,dt+\frac{1}{N-1}\sum_{j: j\neq i} m_jK_{ij}(X^i, X^j)\,dt+\sigma dW^i,~~i=1,2,\cdots, N.
\end{gather}
The argument in this paper can be generalized to second order systems without difficulty, which we omit.
In \eqref{eq:Nparticlesys}, $b(\cdot)$ is the external force field, $\{W^i\}'s$ are some given independent $d$ dimensional Wiener processes (the standard Brownian motions) and we impose
\begin{gather}
m_j\ge 0.
\end{gather}
Note that this model includes the cases for particles with multispecies since the signs of the interaction can be included into $K_{ij}$ and for \eqref{eq:sdeeq} $m_j=|q_j|$. 

For notational convenience, we define
\begin{gather}
F_i(x):=b(x_i)+\frac{1}{N-1}\sum_{j: j\neq i}m_j K_{ij}(x_i, x_j),
\end{gather}
where 
\[
x:=(x_1, \cdots, x_N)\in \R^{Nd}.
\]

The (random) empirical probability measure corresponding  to \eqref{eq:Nparticlesys} is given by
\begin{gather}\label{eq:empiricalmeasure}
\mu^N:=\frac{1}{N}\sum_{j=1}^N \omega_j \delta(x-X^j),
\end{gather}
where
\[
\omega_j=\frac{Nm_j}{\sum_j m_j}.
\]

\begin{assumption}\label{ass:mass}
We assume there are positive constants $A,M$ independent of $N$ such that
\begin{gather}
\frac{1}{N}\sum_j m_j=M,~~\max_j|m_j|\le A.
\end{gather}
\end{assumption}
With the assumption above, we find
\begin{gather}
\omega_j=O(1).
\end{gather}

Below are some assumptions on the external field and interaction kernels.
\begin{assumption}\label{ass:kernelfunctions}
Moreover, we assume $b(\cdot)$ is one-sided Lipschitz:
\begin{gather}
(z_1-z_2)\cdot (b(z_1)-b(z_2))\le \beta |z_1-z_2|^2
\end{gather}
for some constant $\beta$ and that $b, \nabla b$ have polynomial growth
\begin{gather}
|b(z)|+|\nabla b|\le C(1+|z|)^q. 
\end{gather}
The functions $K_{ij}(\cdot, \cdot)$ and their derivatives up to second order are uniformly bounded in $1\le i,j\le N$.
\end{assumption}

\subsection{The Random Batch Method and mathematical setup}

We now give some detailed explanation of the random batch method proposed in \cite{jin2020random}, when applied on \eqref{eq:Nparticlesys}. Suppose the computational interval is $[0, T]$. We pick a small time step $\tau$, and define the discrete time grids
\begin{gather}
t_k:=k\tau,~~k\in \mathbb{N}.
\end{gather}
The number of iteration for the algorithm is
\begin{gather}\label{eq:NT}
N_T:=\left\lceil \frac{T}{\tau} \right\rceil.
\end{gather}
At each time grid $t_k$, we divide the 
\[
N=np
\]
 particles into $n$ small batches with equal size $p$ ($p\ll N$, often $p=2$) randomly. We have assumed $p$ divides $N$ for convenience. Denote the $n$ batches by $\mathcal{C}_q, q=1,\cdots, n$, and then each particle only interacts particles within its own batch. The detail is shown in Algorithm \ref{rbm}). Clearly, each iteration contains two main steps: (1) Randomly dividing the particles into $n$ batches (implemented by random permuation, costing $O(N)$ \cite{durstenfeld1964}); (2) particles interact inside batches only.

\begin{algorithm}[H]
\caption{(RBM)}\label{rbm}
\begin{algorithmic}[1]
\For {$k \text{ in } 1: [T/\tau]$}   
\State Divide $\{1, 2, \ldots, pn\}$ into $n$ batches randomly.
     \For {each batch  $\mathcal{C}_q$} 
     \State Update $\tilde{X}^i$'s ($i\in \mathcal{C}_q$) by solving the following SDE with $t\in [t_{k-1}, t_k)$.
     \begin{gather}\label{eq:firstalgorithm}
            d\tilde{X}^i=b(\tilde{X}^i) dt+\frac{1}{p-1}\sum_{j\in \mathcal{C}_q,j\neq i}m_j K_{ij}(\tilde{X}^i,\tilde{X}^j)dt+\sigma dW^i.
      \end{gather}
      \EndFor
\EndFor
\end{algorithmic}
\end{algorithm}
Above, the Wiener process $W^i$ (Brownian motion) used is the same as in \eqref{eq:Nparticlesys}.

\begin{remark}
For particles with weights, it is desirable to get some random batches by importance sampling. This is left for future study.
\end{remark}

We denote $\mathcal{C}_q^{(k)}$, $1\le q\le n$ the batches at $t_k$, and
\begin{gather}
\mathcal{C}^{(k)}:=\{\mathcal{C}_1^{(k)}, \cdots, \mathcal{C}_n^{(k)}\},
\end{gather}
will denote the random division of batches at $t_{k}$.
It is standard by the Kolmogorov extension theorem \cite{durrett2010} that there exists a probability space $(\Omega, \mathcal{F}, \bbP)$ so that the random variables $\{X_0^{i}, W^i, \mathcal{C}^{(k)}: 1\le i\le N, k\ge 0\}$ are on this probability space and they are all independent.
As usual, we use $\E$ to denote the integration on $\Omega$ with respect the probability measure $\bbP$.
For the convenience of the analysis, we introduce the $L^2(\bbP)$
norm as:
\begin{gather}
\|v\|=\sqrt{\mathbb{E}|v|^2}.
\end{gather}
Define the filtration $\{\mathcal{F}_{k}\}_{k\ge 0}$ by
\begin{gather}
\mathcal{F}_{k-1}=\sigma(X_0^i, W^i(t), \mathcal{C}^{(j)}; t\le t_{k-1}, j\le k-1).
\end{gather}
In other words, $\mathcal{F}_{k-1}$ is the $\sigma$-algebra generated by the initial values $X_0^i$ ($i=1,\ldots, N$), $B^i(t)$, $t\le t_{k-1}$, and $\mathcal{C}^{(j)}$, $j\le k-1$. Hence, $\mathcal{F}_{k-1}$ contains the information of how batches are constructed for $t\in [t_{k-1}, t_k)$. We also introduce the filtration $\{\mathcal{G}_k\}_{k\ge 0}$ by
\begin{gather}
\mathcal{G}_{k-1}=\sigma(X_0^i, W^i(t), \mathcal{C}^{(j)}; t\le t_{k-1}, j\le k-2).
\end{gather}
If we use $\sigma(\mathcal{C}^{(k-1)})$ to mean the $\sigma$-algebra generated by $\mathcal{C}^{(k-1)}$, the random division of batches at $t_{k-1}$, then $\mathcal{F}_{k-1}=\sigma(\mathcal{G}_{k-1}\cup \sigma(\mathcal{C}^{(k-1)}))$.

For further discussion, given some random batches $\mathcal{C}$, we define the random variables
\begin{gather}
I_{ij}=
\begin{cases}
1 & i, j\text{~are in the same batch,}\\
0  & \text{~otherwise~}
\end{cases}
, ~~1\le i, j\le N.
\end{gather}

We will focus on the approximation error of $\tilde{X}$ for $X$ for $t\in [0, T]$. In particular, we define the error process
\begin{gather}\label{eq:Zidef}
Z^i=\tilde{X}^i-X^i,~~i=1,\cdots, N.
\end{gather}

\subsection{A comment about interacting particles with multispecies}

In applications, the most important cases where particles carry weights are the multispecies cases. For example, when we simulate the microscopic particles for the Poisson-Boltzmann equations \cite{allaire2013asymptotic,jin2020direct}, we need to consider charged particles with different valences, in particular
\begin{gather}
dX^i=\frac{1}{N-1}\sum_{j=1: j\neq i}^N Q_j z_iz_j F(X^i-X^j)\,dt+\sigma\, dW^i,~~i=1,\cdots, N,
\end{gather}
where $z_i=\pm 1$ represents whether the charge is positive or negative
and $Q_j\ge 0$ is the absolute value of the charges. In this case, we can define
\begin{gather}
K_{ij}(x, y)=z_iz_j F(x-y),
\end{gather}
and this reduces to \eqref{eq:Nparticlesys}.

Also, people may care about different densities for different species. Similar to \eqref{eq:empiricalmeasure}, one can compute the empirical measures for all these species separately. The empirical measure \eqref{eq:empiricalmeasure} is then a mixture of them. Hence, the model \eqref{eq:Nparticlesys} is rich enough to include interacting particles with disparate species and weights. Below, we will address these uniformly under the framework of \eqref{eq:Nparticlesys}.

\section{The strong convergence}\label{sec:strong}

Since there is no exchangeability, we have to use weighted average of the errors. 
We consider
\begin{gather}\label{eq:strongerror}
J(t):=\frac{1}{2N}\sum_{i=1}^N m_i \E|\tilde{X}^i-X^i|^2,
\end{gather}
which is the strong error. As a common convention, the ``$\frac{1}{2}$'' prefactor is used for energies of quadratic forms. Recently, a certain quantum correspondence of this error has been used by Golse et al. to prove the convergence of Random Batch Method for $N$-body Schr\"odinger equations \cite{golse2019random}.

\begin{theorem}\label{thm:strongconv}
Suppose Assumptions \ref{ass:mass}--\ref{ass:kernelfunctions} hold. Then, it holds that
\begin{gather}
\sup_{t\le T}J(t)\le C\sqrt{\frac{\tau}{p-1}+\tau^2},
\end{gather}
where $C$ is independent of $N, p$. 
\end{theorem}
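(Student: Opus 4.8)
The plan is to derive a closed differential inequality for the weighted energy $J(t)$ and close it by Gr\"onwall's inequality. Since the Brownian motions in \eqref{eq:Nparticlesys} and \eqref{eq:firstalgorithm} are identical, they cancel in $Z^i=\tilde X^i-X^i$, so on each subinterval $[t_{k-1},t_k)$ the error solves the random ODE
\[
\dot Z^i=\big(F_i(\tilde X)-F_i(X)\big)+\chi_i(\tilde X),
\]
where I split the RBM force into the full force plus the \emph{fluctuation}
\[
\chi_i(x):=\frac{1}{p-1}\sum_{j\in\mathcal{C}_q,\,j\neq i}m_jK_{ij}(x_i,x_j)-\frac{1}{N-1}\sum_{j\neq i}m_jK_{ij}(x_i,x_j).
\]
Differentiating \eqref{eq:strongerror} gives $\frac{d}{dt}J=\frac1N\sum_i m_i\,\E[Z^i\cdot\dot Z^i]$. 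The external-field contribution is handled by the one-sided Lipschitz bound of Assumption \ref{ass:kernelfunctions}, and the interaction difference $F_i(\tilde X)-F_i(X)$ by the boundedness of $\nabla K_{ij}$ together with Assumption \ref{ass:mass} (using $\frac1N\sum_j m_j=M$ and $m_j\le A$ to keep constants independent of $N$); after a weighted Cauchy--Schwarz these two pieces are together bounded by $CJ$. Everything thus reduces to controlling the single cross term $R(t):=\frac1N\sum_i m_i\,\E[Z^i(t)\cdot\chi_i(\tilde X(t))]$.

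The heart of the argument is two weighted auxiliary lemmas that replace the exchangeability used in \cite{jin2020random}. First, a direct computation over the uniform random partition gives the \emph{unbiasedness} $\E[\chi_i(x)\mid\mathcal{G}_{k-1}]=0$ for any $\mathcal{G}_{k-1}$-measurable configuration $x$ (a fixed $j\neq i$ lands in the batch of $i$ with probability $\frac{p-1}{N-1}$), and a sampling-variance estimate then yields the \emph{weighted} bound $\frac1N\sum_i m_i\,\E|\chi_i(\tilde X)|^2\le \frac{C}{p-1}$. Second, because $b$ has only polynomial growth, one needs moment bounds $\sup_{t\le T}\big(\E|X^i|^{2q}+\E|\tilde X^i|^{2q}\big)\le C$ uniform in $N$, together with the one-step regularity $\|\tilde X^i(t)-\tilde X^i(t_{k-1})\|\le C\sqrt\tau$ on $[t_{k-1},t_k)$ (the $\sqrt\tau$ coming from the Brownian increment). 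With unbiasedness in hand, the leading part of $R(t)$ vanishes: since $Z^i(t_{k-1})$ and $\tilde X(t_{k-1})$ are $\mathcal{G}_{k-1}$-measurable,
\[
\E[Z^i(t_{k-1})\cdot\chi_i(\tilde X(t_{k-1}))]=\E\big[Z^i(t_{k-1})\cdot\E[\chi_i(\tilde X(t_{k-1}))\mid\mathcal{G}_{k-1}]\big]=0.
\]

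It then remains to estimate the increment $Z^i(t)\chi_i(\tilde X(t))-Z^i(t_{k-1})\chi_i(\tilde X(t_{k-1}))$. Writing it as $\Delta Z^i\cdot\chi_i(t_{k-1})+Z^i(t_{k-1})\cdot\Delta\chi_i+\Delta Z^i\cdot\Delta\chi_i$, using $\Delta Z^i=\int_{t_{k-1}}^t\dot Z^i\,ds$ (whose dominant drift is again $\chi_i$), the variance bound $\frac{C}{p-1}$, the increment bound $\frac1N\sum_i m_i\|\Delta\chi_i\|^2\le \frac{C\tau}{p-1}$, and weighted Cauchy--Schwarz, one obtains $R(t)\le C\sqrt{J(t)}\,\big(\tfrac{\tau}{p-1}+\tau^2\big)^{1/2}+C\big(\tfrac{\tau}{p-1}+\tau^2\big)$. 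Combining this with the crude a priori bound $\sup_t J(t)\le C_0$ — immediate from the uniform moment bounds and $\frac1N\sum_j m_j=M$ — the prefactor $\sqrt{J}$ may be absorbed into the constant, leaving $\frac{d}{dt}J\le CJ+C\big(\tfrac{\tau}{p-1}+\tau^2\big)^{1/2}$. Gr\"onwall's inequality on $[0,T]$ then delivers $\sup_{t\le T}J(t)\le C\big(\tfrac{\tau}{p-1}+\tau^2\big)^{1/2}$ with $C$ independent of $N$ and $p$.

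The main obstacle is precisely the cross term $R(t)$: without exchangeability one cannot symmetrize the sum, so the only route to an $N$-independent estimate is conditional unbiasedness combined with the weighted variance bound $\frac1N\sum_i m_i\E|\chi_i|^2\le C/(p-1)$ — this is where Assumption \ref{ass:mass} (bounded weights and bounded average) and the refined auxiliary lemmas are indispensable. A secondary difficulty is keeping every constant uniform in $N$ despite the polynomial growth of $b$, which forces the uniform moment estimates to be established before any Lipschitz-type bound can be applied.
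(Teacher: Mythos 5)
Your overall architecture coincides with the paper's: a differential inequality for $J$, the splitting of the drift of $Z^i$ into $F_i(\tilde X)-F_i(X)$ plus the fluctuation $\chi_i$, the vanishing of $\E[Z^i(t_{k-1})\cdot\chi_i(\tilde X(t_{k-1}))]$ via conditioning on $\mathcal{G}_{k-1}$, an increment analysis on $[t_{k-1},t]$, and Gr\"onwall. The weighted first-term estimate and the a priori bound $J\le C_0$ are fine. The problem is that the entire difficulty of the theorem is concentrated in the one estimate you assert without proof, namely
\[
\frac1N\sum_i m_i\,\|\chi_i(\tilde X(t))-\chi_i(\tilde X(t_{k-1}))\|^2\le \frac{C\tau}{p-1}.
\]
The obstruction is that for $t>t_{k-1}$ the evolved positions $\tilde X^j(t)$ are \emph{not} independent of the random division $\mathcal{C}^{(k-1)}$, so neither the variance identity of Lemma \ref{thm:consistency} nor the weighted random-sum Lemma \ref{lmm:normofrandomsum} applies directly to $\Delta\chi_i$; a crude Lipschitz bound only gives $\|\Delta\chi_i\|\le C\sqrt{\tau}$, with no gain in $p$. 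To recover the $1/(p-1)$ (plus an unavoidable $O(\tau^2)$ correction from the drift, which your statement omits) one must Taylor-expand $\delta\tilde K^{ij}$ around $t_{k-1}$, condition on $\mathcal{F}_{k-1}$, invoke the almost-sure conditional increment bounds \eqref{eq:conditionalincrements} of Lemma \ref{lmm:conditionalest} so that the resulting bounds are batch-independent, and only then apply Lemma \ref{lmm:normofrandomsum} to $\mathcal{F}_{k-1}$- or $\mathcal{G}_{k-1}$-measurable quantities (with the pathwise reduction $|Z^j(t)|\le C|Z^j(t_{k-1})|+C\tau$ of Lemma \ref{lmm:Zestimate} to replace batch-dependent data by batch-independent data). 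This is precisely the content of the paper's Steps 1--2 and of the ``refined'' lemmas; in your write-up it is a black box.

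The gap is not cosmetic, because your final absorption step leans on it. The paper does not estimate $\|\Delta\chi_i\|$ in $L^2$ at all for the term $\E[Z^i(t_{k-1})\cdot\Delta\chi_i]$: it bounds $\|\E[\Delta\chi_i\mid\mathcal{F}_{k-1}]\|\le C\tau$ (the conditional mean kills the martingale part of the increment), which yields a clean $\delta\|Z^i\|^2+C\tau^2$ and ultimately the stronger conclusion $J\le C(\tfrac{\tau}{p-1}+\tau^2)$ without the square root. Your route replaces this by an unconditioned Cauchy--Schwarz, and then absorbs $\sqrt{J}$ using $J\le C_0$ rather than Young's inequality; if the increment bound degrades to $\|\Delta\chi_i\|\le C\sqrt\tau$ (which is all that is justified as written), this chain produces $J\le C\sqrt{\tau}$ with $C$ \emph{not} controllable by $\sqrt{\tau/(p-1)+\tau^2}$ uniformly in $p$, violating the theorem's requirement that $C$ be independent of $p$. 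So either supply the conditional-expectation argument for the increment (as the paper does), or prove the sharpened $L^2$ increment bound $\|\Delta\chi_i\|^2\le C(\tfrac{\tau}{p-1}+\tau^2)$ honestly and close with Young's inequality; as it stands the proposal is incomplete at its decisive step.
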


Define the error of the random approximation for the interacting force by
\begin{gather}\label{eq:errorofforce}
\begin{split}
    \chi_i(x)&:=\frac{1}{p-1}\sum_{j\in \mathcal{C}_{\theta},j\neq i }m_jK_{ij}(x_i, x_j)
    -\frac{1}{N-1}\sum_{j: j\neq i}m_j K_{ij}(x_i, x_j)\\
    &=\frac{1}{p-1}\sum_{j: j\neq i }I_{ij}m_jK_{ij}(x_i, x_j)
    -\frac{1}{N-1}\sum_{j: j\neq i}m_j K_{ij}(x_i, x_j),
\end{split}
\end{gather}
where $x_i\in \R^d$, and again $x=(x_1, \cdots, x_N)\in \R^{Nd}$.

We have the following facts 
\begin{lemma}\label{lmm:estimatesofindicator}
For $i\neq j$, it holds that
\begin{gather}
\mathbb{E}I_{ij}=\frac{p-1}{N-1},
\end{gather}
and for distinct $i,j,\ell$, it holds that
\begin{gather}
\E I_{ij}I_{i\ell}=\frac{(p-1)(p-2)}{(N-1)(N-2)}.
\end{gather}
\end{lemma}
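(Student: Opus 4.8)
The plan is to recognize each expectation as a probability and then evaluate that probability by an elementary symmetry argument for the random partition. Since every $I_{ij}$ is $\{0,1\}$-valued, we have $\E I_{ij} = \bbP(i,j \text{ lie in the same batch})$. Likewise, because $i,j,\ell$ are distinct and $I_{ij}I_{i\ell}$ is again an indicator, $I_{ij}I_{i\ell} = \mathbf{1}_{\{j,\ell \in \text{batch}(i)\}}$, so $\E I_{ij} I_{i\ell} = \bbP(i,j,\ell \text{ all lie in the same batch})$. Thus the whole lemma reduces to two combinatorial probability computations.

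The key structural observation I would invoke is the exchangeability of the random division: the batches are produced by a uniform random partition of $\{1,\dots,N\}$ into $n$ blocks of size $p$ (equivalently, by a uniformly random permutation of the labels followed by cutting into blocks), so the law is invariant under relabeling of particles. Conditioning on the particle $i$ and on the batch containing it, the remaining $p-1$ members of that batch therefore form a uniformly random $(p-1)$-subset of the $N-1$ particles other than $i$. Granting this, both identities become hypergeometric counts: the chance that a fixed $j\neq i$ is among those members is $\binom{N-2}{p-2}/\binom{N-1}{p-1} = (p-1)/(N-1)$, and the chance that two fixed distinct particles $j,\ell$ (both $\neq i$) both land in $i$'s batch is $\binom{N-3}{p-3}/\binom{N-1}{p-1} = (p-1)(p-2)/((N-1)(N-2))$. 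Equivalently, and this is the version I would actually write, one fills the $p-1$ open slots of $i$'s batch sequentially: $\bbP(j \in \text{batch}(i)) = (p-1)/(N-1)$, and then conditionally $\bbP(\ell \in \text{batch}(i) \mid j \in \text{batch}(i)) = (p-2)/(N-2)$, since after placing $j$ there remain $p-2$ slots among the $N-2$ still-unassigned particles; multiplying the two factors yields the claim.

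There is no genuine obstacle here, as the lemma is elementary combinatorics; the only point deserving a line of justification is the exchangeability claim, namely that conditioned on $i$ the other members of its batch form a uniform random subset. This is exactly what makes the naive counting legitimate, and it follows immediately from the permutation-invariance of the random grouping. Everything else is a routine evaluation of binomial ratios.
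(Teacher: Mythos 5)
Your proof is correct, and it is essentially the same elementary combinatorial argument as the one the paper delegates to \cite[Lemma 3.1]{jin2020random}: interpret the expectations as the probabilities that $j$ (resp.\ $j$ and $\ell$) fall into the batch containing $i$, and evaluate these by noting that the other $p-1$ members of that batch form a uniform random subset of the remaining $N-1$ particles. The exchangeability point you flag is exactly the one-line justification needed, so nothing is missing.
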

The proofs have been done in the proof of \cite[Lemma 3.1]{jin2020random}, for which we omit. Using Lemma \ref{lmm:estimatesofindicator}, one can have the following  consistency of the Random Batch Method.
\begin{lemma}\label{thm:consistency}
    For given $x=(x_1, \ldots, x_N)\in \mathbb{R}^{Nd}$, it holds that
    \begin{gather}
        \mathbb{E}\chi_i(x)=0.
    \end{gather}
    Moreover, the second moment is given by
\begin{gather}\label{eq:var1}
        \mathbb{E}|\chi_i(x)|^2 = 
        \left(\frac{1}{p-1}-\frac{1}{N-1}\right)\Lambda_i(x),
\end{gather}
    where
\begin{gather}\label{eq:Lambda}
        \Lambda_i(x):=\frac{1}{N-2}
        \sum_{j: j\neq i}\Big| m_j K_{ij}(x_i, x_j)-\frac{1}{N-1}
        \sum_{\ell: \ell\neq i}m_\ell K_{i\ell}(x_i, x_\ell)  \Big|^2.
\end{gather}
\end{lemma}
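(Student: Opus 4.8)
The plan is to treat this as a direct first- and second-moment computation for $\chi_i(x)$, where the only randomness enters through the batch indicators $I_{ij}$. Throughout I fix $i$ and the configuration $x$, and abbreviate the vectors $g_j := m_j K_{ij}(x_i, x_j)\in\R^d$ for $j\neq i$ together with their mean $\bar g := \frac{1}{N-1}\sum_{j\neq i} g_j$, so that $\chi_i = \frac{1}{p-1}\sum_{j\neq i} I_{ij}\, g_j - \bar g$.

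First I would establish unbiasedness. By linearity and the first identity of Lemma \ref{lmm:estimatesofindicator}, $\E I_{ij} = \frac{p-1}{N-1}$ for $j\neq i$, so $\frac{1}{p-1}\sum_{j\neq i}\E[I_{ij}]\, g_j = \frac{1}{N-1}\sum_{j\neq i} g_j = \bar g$, which exactly cancels the subtracted term and yields $\E\chi_i(x)=0$.

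The key step for the second moment is a centering identity. Since particle $i$ always lies in a batch of size $p$, exactly $p-1$ of the indicators $I_{ij}$ (over $j\neq i$) equal one, i.e. $\sum_{j\neq i} I_{ij} = p-1$ holds deterministically. This lets me rewrite $\chi_i = \frac{1}{p-1}\sum_{j\neq i} I_{ij}\, h_j$ with the centered summands $h_j := g_j - \bar g$, which satisfy $\sum_{j\neq i} h_j = 0$. Expanding the square gives $|\chi_i|^2 = \frac{1}{(p-1)^2}\sum_{j,\ell} I_{ij}I_{i\ell}\,\inner{h_j}{h_\ell}$, which I would split into diagonal ($j=\ell$) and off-diagonal ($j\neq\ell$) contributions. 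Using $I_{ij}^2 = I_{ij}$ together with both moment formulas from Lemma \ref{lmm:estimatesofindicator}, the expectation becomes a linear combination of $\sum_{j\neq i}|h_j|^2$ and $\sum_{j\neq\ell}\inner{h_j}{h_\ell}$.

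Finally I would eliminate the off-diagonal sum via the centering: $\sum_{j\neq\ell}\inner{h_j}{h_\ell} = |\sum_{j\neq i} h_j|^2 - \sum_{j\neq i}|h_j|^2 = -\sum_{j\neq i}|h_j|^2$. Substituting and collecting the prefactors reduces $\E|\chi_i|^2$ to $\frac{N-p}{(p-1)(N-1)(N-2)}\sum_{j\neq i}|h_j|^2$; recognizing that $\sum_{j\neq i}|h_j|^2 = (N-2)\Lambda_i(x)$ by the definition \eqref{eq:Lambda}, and that $\frac{N-p}{(p-1)(N-1)} = \frac{1}{p-1}-\frac{1}{N-1}$, gives exactly \eqref{eq:var1}. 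The computation is otherwise routine; the only real subtlety is the centering trick, which both exploits the deterministic constraint $\sum_{j\neq i} I_{ij} = p-1$ and forces the awkward cross terms to collapse — without it one is left with a sum of $\inner{g_j}{g_\ell}$ that does not visibly reduce to the clean variance form $\Lambda_i(x)$.
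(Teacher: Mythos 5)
Your proposal is correct, and it supplies in full the computation that the paper itself omits (deferring to Lemma 3.1 of the earlier work); the route is the standard one: unbiasedness from $\E I_{ij}=\frac{p-1}{N-1}$, then the centering step based on the deterministic identity $\sum_{j\neq i}I_{ij}=p-1$, followed by the diagonal/off-diagonal split using both indicator moments and the cancellation $\sum_{j\neq \ell}\inner{h_j}{h_\ell}=-\sum_{j\neq i}|h_j|^2$. All the arithmetic checks out, including $\frac{N-p}{(p-1)(N-1)}=\frac{1}{p-1}-\frac{1}{N-1}$ and $\sum_{j\neq i}|h_j|^2=(N-2)\Lambda_i(x)$, so the argument closes exactly as claimed.
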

Though slightly different from \cite[Lemma 3.1]{jin2020random}, the proof is essentially the same and we omit.

We move to some important additional estimates:
\begin{lemma}\label{lmm:conditionalest}
Let $X^i$ and $\tilde{X}^i$ be solutions to \eqref{eq:Nparticlesys} and 
\eqref{eq:firstalgorithm} respectively. Suppose Assumptions \ref{ass:mass}--\ref{ass:kernelfunctions} hold. Then,
\begin{gather}\label{eq:uniformmoments}
\sup_{t\le T}(\mathbb{E}|X^i(t)|^q+\mathbb{E}|\tilde{X}^i(t)|^q)\le C_q.
\end{gather}
Besides, for any $k>0$ and $q\ge 2$,
\begin{gather}\label{eq:conditionmoment}
\sup_{t\in [t_{k-1}, t_k)}\left|\mathbb{E}(|\tilde{X}^i(t)|^q |\mathcal{F}_{k-1})\right|
\le C_1|\tilde{X}^i(t_{k-1})|^q+C_2.
\end{gather}
holds almost surely.

Moreover, almost surely, it holds that
\begin{gather}\label{eq:conditionalincrements}
\begin{split}
& |\E(\tilde{X}^i(t)-\tilde{X}^i(t_{k-1}) | \mathcal{F}_{k-1}) |
\le C(1+|\tilde{X}^i(t_{k-1})|^q)\tau,\\
& \Big|\mathbb{E}\left(|\tilde{X}^i(t)-\tilde{X}^i(t_{k-1})|^2 | \mathcal{F}_{k-1}\right) \Big| \le C(1+|\tilde{X}^i(t_{k-1})|^{q}) \tau.
\end{split}
\end{gather}
\end{lemma}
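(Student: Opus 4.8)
The plan is to establish the three estimates in Lemma~\ref{lmm:conditionalest} by standard stochastic analysis arguments adapted to the random-batch dynamics. The key observation is that both \eqref{eq:Nparticlesys} and \eqref{eq:firstalgorithm} have the same structure: a possibly unbounded but one-sided Lipschitz drift $b$, a uniformly bounded interaction term (by Assumption~\ref{ass:kernelfunctions}, since $K_{ij}$ is bounded and the weights $m_j$ are $O(1)$ by Assumption~\ref{ass:mass}), and additive noise $\sigma\,dW^i$. On any subinterval $[t_{k-1},t_k)$ the batch $\mathcal{C}_q$ containing $i$ is frozen (it is $\mathcal{F}_{k-1}$-measurable), so conditionally on $\mathcal{F}_{k-1}$ the process $\tilde{X}^i$ solves an ordinary SDE with a fixed, bounded interaction term. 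This is what makes the conditional estimates tractable.

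First I would prove the uniform moment bound \eqref{eq:uniformmoments}. The plan is to apply It\^o's formula to $|X^i|^q$ (and likewise $|\tilde{X}^i|^q$), take expectations, and control the drift. The interaction term and the noise contribute only bounded and polynomially-dominated terms; the potentially dangerous term is $b(X^i)\cdot X^i$, but the one-sided Lipschitz condition $(z_1-z_2)\cdot(b(z_1)-b(z_2))\le\beta|z_1-z_2|^2$ together with the polynomial growth $|b(z)|\le C(1+|z|)^q$ lets me bound $X^i\cdot b(X^i)\le \beta|X^i|^2 + X^i\cdot b(0)\le C(1+|X^i|^2)$, which gives a Gronwall inequality of the form $\frac{d}{dt}\E|X^i|^q\le C_q(1+\E|X^i|^q)$. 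Gronwall's lemma then yields the uniform-in-$t$ (hence uniform-in-$N$, since the constants depend only on $\beta$, $A$, $M$, $\sigma$ and the $K$-bounds) moment bound on $[0,T]$. The same computation works for $\tilde{X}^i$ because the replacement of $\frac{1}{N-1}\sum_{j\neq i}$ by $\frac{1}{p-1}\sum_{j\in\mathcal{C}_q}$ does not change the uniform boundedness of the interaction term.

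Next, for the conditional moment estimate \eqref{eq:conditionmoment} I would run the same It\^o/Gronwall argument but conditioned on $\mathcal{F}_{k-1}$ over the short interval $[t_{k-1},t_k)$, starting from the (now deterministic relative to $\mathcal{F}_{k-1}$) value $\tilde{X}^i(t_{k-1})$. Since the interval has length at most $\tau\le T$, Gronwall over $[t_{k-1},t)$ produces a factor $e^{C\tau}\le e^{CT}=C_1$ multiplying $|\tilde{X}^i(t_{k-1})|^q$ plus an additive constant $C_2$, giving \eqref{eq:conditionmoment}. For the increment bounds \eqref{eq:conditionalincrements}, I would write $\tilde{X}^i(t)-\tilde{X}^i(t_{k-1})=\int_{t_{k-1}}^t F_i^{\mathrm{batch}}(\tilde{X})\,ds+\sigma(W^i(t)-W^i(t_{k-1}))$, where $F_i^{\mathrm{batch}}$ is the batch drift. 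Taking $\E(\cdot\mid\mathcal{F}_{k-1})$ kills the martingale (Brownian) part; the remaining drift integral is bounded in conditional expectation by $\int_{t_{k-1}}^t \E(|F_i^{\mathrm{batch}}(\tilde{X}(s))|\mid\mathcal{F}_{k-1})\,ds$, and using the polynomial growth of $b$, the boundedness of $K$, and the conditional moment bound \eqref{eq:conditionmoment} just proved, this is $\le C(1+|\tilde{X}^i(t_{k-1})|^q)\tau$, giving the first line. For the second line one expands the square: the cross term and drift-squared term are $O(\tau^2)$ and the noise term $\E(\sigma^2|W^i(t)-W^i(t_{k-1})|^2\mid\mathcal{F}_{k-1})=\sigma^2 d\,(t-t_{k-1})=O(\tau)$ dominates, again with the drift moments controlled through \eqref{eq:conditionmoment}.

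The main obstacle is the unbounded drift $b$: because $b$ has only polynomial growth and is merely one-sided Lipschitz (not globally Lipschitz), one cannot directly invoke standard SDE moment inequalities that assume linear growth, and the It\^o applications require justifying that the local martingale terms are genuine martingales (so that their expectations vanish). The clean way to handle this rigorously is a localization/stopping-time argument: introduce stopping times $\tau_R=\inf\{t:|\tilde{X}^i(t)|\ge R\}$, carry out the It\^o–Gronwall estimates up to $t\wedge\tau_R$ where all integrands are bounded, obtain bounds uniform in $R$, and pass to the limit $R\to\infty$ via Fatou's lemma and monotone convergence. The polynomial-growth exponent $q$ appearing in Assumption~\ref{ass:kernelfunctions} must be threaded consistently through these estimates, but the one-sided Lipschitz bound is precisely what guarantees the Gronwall constant does not blow up, so no genuine difficulty beyond careful bookkeeping arises once localization is in place.
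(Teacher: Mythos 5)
Your proposal is correct and follows essentially the same route as the paper: It\^o's formula combined with the one-sided Lipschitz condition and Gr\"onwall for the (conditional) moment bounds, and the integral representation with the Brownian term killed by conditioning for the increment estimates. The only divergence is in the second line of \eqref{eq:conditionalincrements}, where the paper applies It\^o's formula to $|\tilde{X}^i(t)-\tilde{X}^i(t_{k-1})|^2$ and splits $(\tilde{X}^i(t)-\tilde{X}^i(t_{k-1}))\cdot b(\tilde{X}^i(t))$ via the one-sided Lipschitz condition before a Gr\"onwall step, whereas you expand the square of the integral representation directly; both work, though note your cross term is only $O(\tau^{3/2})$ by conditional Cauchy--Schwarz rather than the $O(\tau^2)$ you state --- still comfortably within the claimed $O(\tau)$ bound.
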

Note that the second equation in \eqref{eq:conditionalincrements} is different from that in \cite{jin2020random}. In fact, the proof in  \cite{jin2020random} has a small gap, and one needs this refined estimate to fill in that gap as well (\cite{jin2020random} Page 13, from Line 17 to Line 19, the variable $\E(|\delta \tilde{X}^i|^2+|\delta\tilde{X}^j|^2 | \mathcal{F}_{m-1})$ is not independent of $\mathcal{C}_{\theta}$; to get Line 19, we need this refined version). Though the proof is not hard, we attach it in Appendix \ref{app:conditionalmoments}.

The following lemma is an improved version of \cite[Lemma 3.2]{jin2020random}, which is very important to establish the strong convergence for the problem considered in this paper.
\begin{lemma}\label{lmm:normofrandomsum}
Fix $i\in\{1,\ldots, N\}$. Let $\mathcal{C}_{\theta}$ be the random batch of size $p$ that contains $i$ in the random division. 
Let $Y_j$ ($1\le j\le N$) be $N$ random variables (or random vectors) that are independent of $\mathcal{C}_{\theta}$. Then, for $p\ge 2$,
\begin{gather}
\left\|\frac{1}{p-1}\sum_{j\in\mathcal{C}_{\theta},j\neq i}Y_j\right\|\le 
\left(\frac{1}{N-1}\sum_{j: j\neq i}\|Y_j\|^2\right)^{1/2}.
\end{gather}
\end{lemma}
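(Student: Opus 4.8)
The plan is to square both sides and expand the left-hand side using the definition of the norm. Writing $S := \frac{1}{p-1}\sum_{j\in\mathcal{C}_{\theta},j\neq i}Y_j$, I note that the inclusion ``$j\in\mathcal{C}_{\theta}$'' can be encoded via the indicator $I_{ij}$, since $\mathcal{C}_{\theta}$ is precisely the batch containing $i$ and $I_{ij}=1$ exactly when $j$ shares that batch with $i$. Thus $S = \frac{1}{p-1}\sum_{j:j\neq i} I_{ij} Y_j$, and
\begin{gather}
\|S\|^2 = \E|S|^2 = \frac{1}{(p-1)^2}\sum_{j:j\neq i}\sum_{\ell:\ell\neq i}\E\big(I_{ij}I_{i\ell}\, Y_j\cdot Y_\ell\big).
\end{gather}
The crucial structural point is that the $Y_j$ are independent of $\mathcal{C}_{\theta}$, hence of all the indicators $I_{ij}$, so each expectation factors as $\E(I_{ij}I_{i\ell})\,\E(Y_j\cdot Y_\ell)$.

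Next I would split the double sum into diagonal ($j=\ell$) and off-diagonal ($j\neq\ell$) terms and substitute the moments from Lemma \ref{lmm:estimatesofindicator}. For the diagonal, $\E(I_{ij}^2)=\E(I_{ij})=\frac{p-1}{N-1}$ (since $I_{ij}$ is $0/1$-valued), contributing $\frac{1}{(p-1)^2}\cdot\frac{p-1}{N-1}\sum_{j}\E|Y_j|^2$. For the off-diagonal terms with distinct $i,j,\ell$, $\E(I_{ij}I_{i\ell})=\frac{(p-1)(p-2)}{(N-1)(N-2)}$. The main obstacle — really the only step requiring care rather than bookkeeping — is controlling the sign of the off-diagonal contribution, since the cross terms $\E(Y_j\cdot Y_\ell)$ need not be nonnegative. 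The way through is to observe that, because the prefactor $\frac{(p-1)(p-2)}{(N-1)(N-2)}$ is a constant independent of $j,\ell$, the entire off-diagonal sum equals this constant times $\sum_{j\neq\ell}\E(Y_j\cdot Y_\ell) = \E\big|\sum_{j:j\neq i}Y_j\big|^2 - \sum_{j:j\neq i}\E|Y_j|^2$, and the first piece is manifestly nonnegative while the second is exactly what we want to keep.

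Putting these together, I expect the diagonal and the ``$-\sum\E|Y_j|^2$'' part of the off-diagonal to combine into a single multiple of $\sum_{j}\E|Y_j|^2$ with coefficient
\begin{gather}
\frac{1}{(p-1)^2}\left(\frac{p-1}{N-1}-\frac{(p-1)(p-2)}{(N-1)(N-2)}\right),
\end{gather}
while the leftover $\frac{(p-1)(p-2)}{(p-1)^2(N-1)(N-2)}\,\E\big|\sum_j Y_j\big|^2$ is a nonnegative term I simply discard to obtain an upper bound. A short algebraic simplification of the coefficient should reduce it to $\frac{1}{(N-1)(p-1)}\cdot\frac{N-p}{N-2}$, which is bounded above by $\frac{1}{(p-1)(N-1)}$ since $\frac{N-p}{N-2}\le 1$ for $p\ge 2$. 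This yields $\|S\|^2\le \frac{1}{(p-1)(N-1)}\sum_{j:j\neq i}\E|Y_j|^2 \le \frac{1}{N-1}\sum_{j:j\neq i}\|Y_j\|^2$ for $p\ge 2$, and taking square roots gives the claim.

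I should double-check the two degenerate regimes to make sure no term is mishandled: when $p=2$ there are no genuine off-diagonal $(j,\ell)$ with all three indices distinct contributing through a single-batch partner, so the $(p-2)$ factor correctly vanishes and the bound collapses to the diagonal estimate; the inequality still holds with room to spare. This confirms the discarding of the nonnegative cross term is legitimate throughout, and the argument is uniform in $N$ as required.
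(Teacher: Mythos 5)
Your setup is the same as the paper's: write the batch sum with indicators $I_{ij}$, use independence to factor $\E(I_{ij}I_{i\ell}Y_j\cdot Y_\ell)=\E(I_{ij}I_{i\ell})\E(Y_j\cdot Y_\ell)$, and insert the moments from Lemma \ref{lmm:estimatesofindicator}. The expansion and the diagonal term are correct. But your treatment of the off-diagonal sum has a sign-direction error that invalidates the argument for $p\ge 3$. You arrive at the exact identity
\begin{equation*}
\|S\|^2=\frac{1}{(p-1)(N-1)}\cdot\frac{N-p}{N-2}\sum_{j:j\neq i}\E|Y_j|^2
+\frac{p-2}{(p-1)(N-1)(N-2)}\,\E\Big|\sum_{j:j\neq i}Y_j\Big|^2,
\end{equation*}
in which \emph{both} terms are nonnegative. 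Discarding the second (nonnegative) term therefore yields a \emph{lower} bound on $\|S\|^2$, not an upper bound; your intermediate claim $\|S\|^2\le\frac{1}{(p-1)(N-1)}\sum_{j\neq i}\E|Y_j|^2$ is false in general. Concretely, take all $Y_j$ equal to a deterministic vector $c$ and $p\ge 3$: then $S=c$ so $\|S\|^2=|c|^2$, while your claimed bound is $\frac{1}{p-1}|c|^2<|c|^2$. (The lemma itself is tight in this example, with equality on both sides, so no genuinely positive contribution can be thrown away.) Your sanity check only tested $p=2$, which is precisely the case where the problematic term carries the factor $p-2=0$ and the error is invisible.

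The fix is short and you have two options. Either bound the discarded term by Cauchy--Schwarz, $\E\big|\sum_{j\neq i}Y_j\big|^2\le (N-1)\sum_{j\neq i}\E|Y_j|^2$, after which the two coefficients combine algebraically to exactly $\frac{1}{N-1}$; or follow the paper's route, which never forms $\E\big|\sum_j Y_j\big|^2$ at all: estimate each cross term by $\E(Y_j\cdot Y_\ell)\le\|Y_j\|\,\|Y_\ell\|\le\frac12(\|Y_j\|^2+\|Y_\ell\|^2)$, so the off-diagonal block is bounded by $\frac{p-2}{(p-1)(N-1)}\sum_{j\neq i}\|Y_j\|^2$, and adding the diagonal block $\frac{1}{(p-1)(N-1)}\sum_{j\neq i}\|Y_j\|^2$ gives $\frac{1}{N-1}\sum_{j\neq i}\|Y_j\|^2$ on the nose. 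Either repair completes the proof; as written, the argument does not.
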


\begin{proof}
By the definition and independence,
\[
\begin{split}
\left\|\frac{1}{p-1}\sum_{j\in\mathcal{C}_{\theta},j\neq i}Y_j\right\|^2
&=\frac{1}{(p-1)^2}\mathbb{E}\left|\sum_{j: j\neq i}I_{ij}Y_j\right|^2 \\
&=\frac{1}{(p-1)^2}\sum_{j,\ell:j\neq i,\ell\neq i}\mathbb{E}(I_{ij}I_{i\ell})\mathbb{E}(Y_j\cdot Y_\ell)\\
&\le \frac{1}{(p-1)^2}\Bigg[\sum_{j,\ell:j\neq i,\ell\neq i,j\neq\ell}\frac{(p-1)(p-2)}{(N-1)(N-2)}\|Y_j\| \|Y_\ell\|  \\
&\quad\quad\quad\quad\quad\quad+\sum_{j:j\neq i}\E I_{ij} \|Y_j\|^2\Bigg]\\
&=:R_1+R_2.
\end{split}
\]
 Note that the independence is used in the second equality.
 The first inequality is due to Lemma \ref{lmm:estimatesofindicator}.
 
 It is easy to calculate
 \[
 \begin{split}
 R_1&\le \frac{p-2}{(p-1)(N-1)(N-2)}\sum_{j,\ell:j\neq i,\ell\neq i,j\neq\ell}(\frac{1}{2}\|Y_j\|^2+\frac{1}{2}\|Y_{\ell}\|^2)\\
&=\frac{p-2}{(p-1)(N-1)}\sum_{j:j\neq i}\|Y_j\|^2,
\end{split}
 \]
 while
 \[
 R_2=\frac{1}{(p-1)(N-1)}\sum_{j:j\neq i}\|Y_j\|^2.
 \]
 Hence,
 \[
 R_1+R_2\le \frac{1}{N-1}\sum_{j: j\neq i}\|Y_j\|^2.
 \]
 The claim thus follows.
\end{proof}

We now consider the error process defined in \eqref{eq:Zidef}.
The derivative of $Z^i$ is clearly given by
\begin{multline}\label{eq:Zderi}
\frac{d}{dt}Z^i=[b(\tilde{X}^i)-b(X^i)]
+\frac{1}{p-1}\sum_{j\in \mathcal{C}_{\theta},j\neq i}m_j K_{ij}(\tilde{X}^i, \tilde{X}^j)\\
-\frac{1}{N-1}\sum_{j\in \mathcal{C}_{\theta}, j\neq i}m_j K_{ij}(X^i, X^j),
\end{multline}
where $\mathcal{C}_{\theta}$ is the random batch in $\mathcal{C}$
that contains $i$.

Define
\begin{gather}\label{eq:deltaKnewdef}
\delta K_{ij}(t):=K_{ij}(\tilde{X}^i(t),\tilde{X}^j(t))-K_{ij}(X^i(t),X^j(t)).
\end{gather}
The right hand side of  \eqref{eq:Zderi} can then be written as 
\begin{gather}\label{eq:Zderi}
\begin{split}
\frac{d}{dt}Z^i
&=[b(\tilde{X}^i)-b(X^i)]+\frac{1}{N-1}\sum_{j: j\neq i}m_j \delta K_{ij}
+\chi_i(\tilde{X})\\
&=[b(\tilde{X}^i)-b(X^i)]+\frac{1}{p-1}\sum_{j\in \mathcal{C}_{\theta}, j\neq i}m_j \delta K_{ij}+\chi_i(X) .
\end{split}
\end{gather}

With this, one can obtain the following simple lemma
\begin{lemma}\label{lmm:Zestimate}
For $t\in [t_{k-1}, t_k)$, 
\begin{gather}
\|Z^i(t)-Z^i(t_{k-1})\|\le C \tau.
\end{gather}
Also, almost surely,
\begin{gather}
|Z^i(t)|\le |Z^i(t_{k-1})|(1+C\tau)+C\tau.
\end{gather}
\end{lemma}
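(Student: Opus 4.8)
The plan is to exploit the fact that the same Brownian motion $W^i$ drives both \eqref{eq:Nparticlesys} and \eqref{eq:firstalgorithm}, so the noise cancels in the difference and $Z^i=\tilde{X}^i-X^i$ solves a pathwise (random) ODE whose velocity is exactly the right-hand side of \eqref{eq:Zderi}. Consequently $t\mapsto Z^i(t)$ is $C^1$ along almost every sample path, and I can estimate it by integrating $\frac{d}{ds}Z^i$ over $[t_{k-1},t]$ for the first assertion and by a Gr\"onwall argument for the second. Throughout, the non-drift terms in \eqref{eq:Zderi}, namely the averaged kernel term $\frac{1}{N-1}\sum_{j\neq i}m_j\delta K_{ij}$ (or $\frac{1}{p-1}\sum_{j\in\mathcal{C}_\theta,j\neq i}m_j\delta K_{ij}$) and the consistency error $\chi_i(\cdot)$, are each an average of uniformly bounded quantities, since $m_j\le A$ by Assumption \ref{ass:mass} and the $K_{ij}$ are uniformly bounded by Assumption \ref{ass:kernelfunctions}; hence both are bounded almost surely by a constant independent of $N$ and $p$.

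For the first ($L^2$) estimate I would write $Z^i(t)-Z^i(t_{k-1})=\int_{t_{k-1}}^t \frac{d}{ds}Z^i(s)\,ds$ and apply Minkowski's integral inequality to get $\|Z^i(t)-Z^i(t_{k-1})\|\le \int_{t_{k-1}}^t \|\frac{d}{ds}Z^i(s)\|\,ds$. It then suffices to show $\|\frac{d}{ds}Z^i\|\le C$ uniformly in $s$, $N$ and $p$. The two non-drift terms are handled by the almost-sure bound noted above, hence are bounded in $\|\cdot\|$ as well, while the drift difference is controlled in $L^2$ by the polynomial growth $|b(z)|\le C(1+|z|)^q$ together with the uniform moment bounds \eqref{eq:uniformmoments}, via $\|b(\tilde{X}^i)-b(X^i)\|^2\le C\,\mathbb{E}\big[(1+|\tilde{X}^i|)^{2q}+(1+|X^i|)^{2q}\big]\le C$. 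Integrating over an interval of length at most $\tau$ then yields $\|Z^i(t)-Z^i(t_{k-1})\|\le C\tau$.

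For the second (almost sure) estimate I would use the energy identity $\tfrac{1}{2}\tfrac{d}{dt}|Z^i|^2=Z^i\cdot\tfrac{d}{dt}Z^i$. The crucial point is that the drift difference must here be treated through the one-sided Lipschitz property in Assumption \ref{ass:kernelfunctions}, which gives $Z^i\cdot[b(\tilde{X}^i)-b(X^i)]\le \beta|Z^i|^2$; the two non-drift terms, being bounded almost surely, contribute at most $C|Z^i|$. This produces $\tfrac{1}{2}\tfrac{d}{dt}|Z^i|^2\le \beta|Z^i|^2+C|Z^i|$, which, after passing to $\phi:=\sqrt{|Z^i|^2+\delta}$ to avoid the non-smoothness of $|Z^i|$ at the origin and letting $\delta\to 0$, reduces to the \emph{linear} differential inequality $\tfrac{d}{dt}|Z^i|\le \beta|Z^i|+C$. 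A Gr\"onwall estimate on $[t_{k-1},t]$ with $t-t_{k-1}\le\tau$ then gives $|Z^i(t)|\le |Z^i(t_{k-1})|e^{\beta\tau}+C_\beta(e^{\beta\tau}-1)$, and in every sign regime of $\beta$ one bounds $e^{\beta\tau}\le 1+C\tau$ and the source term by $C\tau$ for $\tau\le T$, delivering $|Z^i(t)|\le |Z^i(t_{k-1})|(1+C\tau)+C\tau$.

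The main obstacle, and the reason the two parts use different inequalities, is the superlinear growth of $b$: a pointwise bound on $|b(\tilde{X}^i)-b(X^i)|$ is not available almost surely, so the $L^2$ estimate must spend the moment bounds \eqref{eq:uniformmoments}, whereas the almost sure estimate must instead exploit the one-sided Lipschitz structure so that the $b$-term enters only as $\beta|Z^i|^2$. A secondary subtlety is that, to obtain the $O(\tau)$ (rather than $O(\sqrt{\tau})$) increment in the almost sure bound, one should keep the differential inequality linear in $|Z^i|$ rather than converting the cross term $C|Z^i|$ into a constant source by Young's inequality, which would spuriously degrade the order to $\sqrt{\tau}$.
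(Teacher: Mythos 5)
Your proposal is correct and follows essentially the same route as the paper: the $L^2$ increment bound comes from integrating \eqref{eq:Zderi} and controlling the $b$-difference via its polynomial growth together with the uniform moment bounds \eqref{eq:uniformmoments}, while the almost-sure bound comes from dotting \eqref{eq:Zderi} with $Z^i$, using the one-sided Lipschitz condition and the uniform boundedness of the kernel terms to reach $\tfrac{d}{dt}|Z^i|\le C|Z^i|+C$ and then Gr\"onwall. Your added remarks (the $\sqrt{|Z^i|^2+\delta}$ regularization and the warning against applying Young's inequality to the $C|Z^i|$ term, which would degrade the increment to $O(\sqrt{\tau})$) are sound clarifications of steps the paper leaves implicit.
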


\begin{proof}
By \eqref{eq:Zderi}, since $b$ has polynomial growth, the claim for $\|Z^i(t)-Z^i(t_{m-1})\|$ is then an easy consequence of the $q$-moment estimates in Lemma \ref{lmm:conditionalest}.

Dotting  using \eqref{eq:Zderi} with $Z^i$ and using the one-sided Lipschitz condition in Assumption \ref{ass:kernelfunctions}, one has 
\[
\frac{1}{2}\frac{d}{dt}|Z^i|^2\le C|Z^i|^2+C|Z^i|.
\]
Hence, almost surely, it holds that
\[
\frac{d}{dt}|Z^i|\le C|Z^i|+C.
\]
The second claim then follows.
\end{proof}

We now give the proof of the strong convergence in Theorem \ref{thm:strongconv}.
\begin{proof}[Proof of Theorem \ref{thm:strongconv}]
First,
\[
\frac{dJ(t)}{dt}=\frac{1}{N}\sum_{i=1}^N m_i \E \Big\{ Z^i\cdot [(F_i(\tilde{X})-F_i(X))
+\chi_i(\tilde{X})]\Big\}.
\]

The first term is easy to bound by the one-sided Lipschitz condition of $b$ and the conditions of $K_{ij}$ in Assumption \ref{ass:kernelfunctions} :
\[
\frac{1}{N}\sum_{i=1}^N m_i \E \Big\{Z^i\cdot (F_i(\tilde{X})-F_i(X))\Big\}
\le \frac{C}{N}\sum_i m_i |Z^i|^2+\frac{C}{N(N-1)}\sum_{i\neq j} m_i m_j \E(|Z^i|^2+|Z^j||Z^j|).
\]
This is clearly bounded by $J(t)$.

Now, we focus on the second term. The technique is the same as in our previous work \cite{jin2020random}, but some special modifications are needed for our problem here.

\begin{gather*}
\begin{split}
\frac{1}{N}\sum_{i=1}^N m_i \E \Big\{Z^i(t)\cdot \chi_i(\tilde{X}(t))\Big\}
=&\frac{1}{N}\sum_{i=1}^N m_i \E \Big\{Z^i(t_{k-1})\cdot \chi_i(\tilde{X}(t))\Big\}\\
&+\frac{1}{N}\sum_{i=1}^N m_i \E \Big\{(Z^i(t)-Z^i(t_{k-1}))\cdot \chi_i(\tilde{X}(t))\Big\}\\
=:&I_1+I_2.
\end{split}
\end{gather*}

{\bf Step 1--}  Estimate of $I_1$:

For $I_1$, using the consistency result in Lemma \ref{thm:consistency}, one has
\[
I_1=\frac{1}{N}\sum_{i=1}^N m_i \E \left\{Z^i(t_{k-1})\cdot 
\Big[\chi_i(\tilde{X}(t)) - \chi_i(\tilde{X}(t_{k-1})) \Big]\right\}.
\]
In fact,
\[
\E \left\{Z^i(t_{k-1})\cdot 
\chi_i(\tilde{X}(t_{k-1}))\right\}
=\E\left\{ Z^i(t_{k-1})\cdot \E\Big[  
\chi_i(\tilde{X}(t_{k-1}))|\mathcal{G}_{k-1}\Big]\right\}
=\E 0=0.
\]
This is the only place where the $\sigma$-algebra $\mathcal{G}_{k-1}$
is used.

Note that
\[
\begin{split}
&\E \left\{Z^i(t_{k-1})\cdot 
\Big[\chi_i(\tilde{X}(t)) - \chi_i(\tilde{X}(t_{k-1})) \Big]\right\}\\
&=\mathbb{E}\Bigg( Z^1(t_{k-1})\cdot \mathbb{E}(\chi_{i}(\tilde{X}(t))-\chi_{i}(\tilde{X}(t_{k-1})) |\mathcal{F}_{k-1}) \Bigg) \\
& \le C\|Z^1(t_{k-1})\| \left\|\mathbb{E}[\chi_{i}(\tilde{X}(t))-\chi_{i}(\tilde{X}(t_{k-1})) 
|\mathcal{F}_{k-1}] \right\|.
\end{split}
\]

Using the definition of $\chi_i$ \eqref{eq:errorofforce}, one has
\[
\begin{split}
&\mathbb{E}\left[\chi_{i}(\tilde{X}(s))-\chi_{i}(\tilde{X}(t_{k-1})) |\mathcal{F}_{k-1} \right] \\
&=\frac{1}{p-1}\sum_{j\in \mathcal{C}_{\theta},j\neq i} m_j\mathbb{E}(\delta \tilde{K}^{ij}|\mathcal{F}_{k-1})
-\frac{1}{N-1}\sum_{j:j\neq i}m_j\mathbb{E}(\delta\tilde{K}^{ij}|\mathcal{F}_{k-1}),
\end{split}
\]
where 
\[
\delta\tilde{K}^{ij}=K_{ij}(\tilde{X}^i(s),\tilde{X}^j(s))-K_{ij}(\tilde{X}^i(t_{k-1}), \tilde{X}^j(t_{k-1})).
\]

We first estimate $\mathbb{E}(\delta \tilde{K}^{ij}|\mathcal{F}_{k-1})$.
Denote $\delta\tilde{X}^j:=\tilde{X}^j(s)-\tilde{X}(t_{k-1})$. Performing Taylor expansion around $t_{k-1}$, one has
\[
\delta\tilde{K}^{ij}=(\nabla_{x_i} K_{ij}|_{t_{k-1}}\cdot\delta\tilde{X}^i+
\nabla_{x_j}K_{ij}|_{t_{k-1}}\cdot\delta\tilde{X}^j)
+\frac{1}{2}M:[\delta\tilde{X}^i,\delta\tilde{X}^j]\otimes[\delta\tilde{X}^i,\delta\tilde{X}^j],
\]
with $M$ being a random variable (tensor) bounded by $\|\nabla^2K\|_{\infty}$.
By \eqref{eq:conditionalincrements}, one finds that
 \[
 |\mathbb{E}(\delta \tilde{K}^{ij}|\mathcal{F}_{k-1})|
 \le C(1+|\tilde{X}^i(t_{k-1})|^{q}+|\tilde{X}^j(t_{k-1})|^{q})\tau.
 \]
The right hand side is independent of $\mathcal{C}_{\theta}$, and this is the place where we need the almost surely bound \eqref{eq:conditionalincrements}. Applying Lemma \ref{lmm:normofrandomsum}, one has
\[
\left\|\frac{1}{p-1}\sum_{j\in \mathcal{C}_{\theta},j\neq i} m_j\mathbb{E}(\delta \tilde{K}^{ij}|\mathcal{F}_{k-1}) \right\|
\le C\tau \left(1+\left(\frac{1}{N-1}\sum_{j: j\neq i}\||\tilde{X}^j(t_{k-1})|^{q_1}\|^2\right)^{1/2}\right)\le C\tau.
\]
The term $\|\frac{1}{N-1}\sum_{j:j\neq i}m_j\mathbb{E}(\delta\tilde{K}^{ij}|\mathcal{F}_{k-1})\|$ is much easier to estimate, and it is also bounded by $C\tau$.

Hence
\[
\E Z^i(t_{k-1})\cdot 
\Big[\chi_i(\tilde{X}(t)) - \chi_i(\tilde{X}(t_{k-1})) \Big]
\le C\|Z^i(t_{k-1})\|\tau\le C\|Z^i(t)\|\tau+C\tau^2.
\]

Then,
\[
I_1\le \left(\frac{C}{N}\sum_{i=1}^N m_i \|Z^i(t)\|\tau \right)+C\tau^2 \le 
\delta \frac{1}{N}\sum_{i=1}^N m_i \|Z^i(t)\|^2+C(\delta)\tau^2
\]

{\bf Step 2--} Estimate of $I_2$.

We decompose
\begin{multline*}
I_2=\frac{1}{N}\sum_{i=1}^N m_i \E (Z^i(t)-Z^i(t_{k-1}))\cdot [\chi_i(\tilde{X}(t))-\chi_i(X(t))]\\
+\frac{1}{N}\sum_{i=1}^N m_i \E (Z^i(t)-Z^i(t_{k-1}))\cdot \chi_i(X(t))
=:I_{21}+I_{22}.
\end{multline*}

We first consider $I_{21}$.Clearly,
\[
I_{21}\le \frac{1}{N}\sum_{i=1}^N m_i C\tau \|\chi_i(\tilde{X}(t))-\chi_i(X(t))\|.
\]
Since $K$ is Lipschitz continuous,
\[
|\delta K_{ij}(t)|\le L(|Z^i(t)|+|Z^j(t)|).
\]
Hence,
\begin{gather}\label{eq:randomsumaux}
\left\|\left(\frac{1}{p-1}\sum_{j\in\mathcal{C}_{\theta},j\neq i} m_j \delta K_{ij}(t)\right)\right\|
\le L\left(\|Z^i(t)\|+\left\|  \frac{1}{p-1}\sum_{j\in\mathcal{C}_{\theta},j\neq i}m_j |Z^j(t)|\right\|\right).
\end{gather}
Note that $Z^j(t)$ depends on $\mathcal{C}_{\theta}$ and we cannot apply Lemma \ref{lmm:normofrandomsum}. 
Instead, by Lemma \ref{lmm:Zestimate}, one has that
\[
|Z^i(t)|\le C|Z^i(t_{k-1})|+C\tau.
\]
Since $Z^i(t_{k-1})$ is independent of $\mathcal{C}_{\theta}$, Lemma \ref{lmm:normofrandomsum} then gives us that
 \begin{gather}\label{eq:proofaux2}
 \begin{split}
\left\|  \frac{1}{p-1}\sum_{j\in\mathcal{C}_{\theta},j\neq i}m_j|Z^j(t)|\right\| & \le C\left(\frac{1}{N-1}\sum_{j: j\neq i}m_j^2 \|Z^j(t_{k-1})\|^2\right)^{1/2}+C\tau \\
& \le C\left(\frac{1}{N-1}\sum_{j: j\neq i}m_j \|Z^j(t_{k-1})\|^2\right)^{1/2}+C\tau.
 \end{split}
 \end{gather}
\begin{remark}
This is the place where we need Lemma \ref{lmm:normofrandomsum}, a refined version of \cite[Lemma 3.2]{jin2020random}.
If we use \cite[Lemma 3.2]{jin2020random}, one controls $\|  \frac{1}{p-1}\sum_{j\in\mathcal{C}_{\theta},j\neq i}m_j|Z^j(t_{k-1})|\|$ by $\sup_j m_j \|Z^j(t_{k-1})\|$ which is not enough to close the estimate.
\end{remark}

With this, we find
\[
\begin{split}
I_{21} &\le C\left(\frac{1}{N}\sum_{i=1}^N m_i \tau \left(\frac{1}{N-1}\sum_{j: j\neq i}m_j \|Z^j(t_{k-1})\|^2\right)^{1/2}\right)+C\tau^2\\
& \le \delta\frac{1}{N}\sum_i m_i\|Z^i(t_{k-1})\|^2+C(\delta)\tau^2,
\end{split}
\]
where Assumption \ref{ass:mass} has been used.

We now consider $I_{22}$. We first recall
\begin{gather}\label{eq:Zderipf}
\frac{d}{dt}Z^i=[b(\tilde{X}^i)-b(X^i)]+\frac{1}{p-1}\sum_{j\in\mathcal{C}_{\theta},j\neq i}
m_j \delta K_{ij}(t)+\chi_i(X(t))
\end{gather}
Note
\[
|b(\tilde{X}^i)-b(X^i)|\le \int_0^1|(\tilde{X}^i-X^i)\cdot \nabla b((1-\sigma)\tilde{X}^i+\sigma X^i)|\,d\sigma,
\]
and $\nabla b((1-\sigma)\tilde{X}^i+\sigma X^i)$ is controlled by $C(|\tilde{X}^i|^{q}+|X^i|^{q})$ for some $q>0$. Hence,
\begin{gather*}
\begin{split}
\mathbb{E}|b(\tilde{X}^i)-b(X^i)||\chi_{i}(X(t'))|
& \le \|\chi_{i}(X(t'))\|_{\infty}\|\tilde{X}^i-X^i\| (\mathbb{E}(|\tilde{X}^i|^{q_1}+|X^i|^{q_1})^2)^{1/2} \\
&\le C\|Z^i(t)\|,
\end{split}
\end{gather*}
by \eqref{eq:uniformmoments}.

Integrating \eqref{eq:Zderipf} in time over $[t_{k-1}, t]$, then dotting with $\chi_{i}(X(t))$, and taking the expectation,  one gets  
\begin{multline}\label{eq:proofaux1}
\left| \mathbb{E}((Z^i(t)-Z^i(t_{k-1}))\cdot\chi_{m,i}(X(t))) \right|
\le C\int_{t_{k-1}}^t\|Z^i(s)\| ds+\\
\int_{t_{k-1}}^{t}\mathbb{E}\left[\left(\frac{1}{p-1}\sum_{j\in\mathcal{C}_{\theta},j\neq i}m_j \delta K_{ij}(s)\right)\cdot\chi_{i}(X(t))\right]\,ds
+\mathbb{E}\int_{t_{k-1}}^{t}\chi_{i}(X(s))\cdot\chi_{i}(X(t))\,ds,
\end{multline}
Applying \eqref{eq:randomsumaux} and \eqref{eq:proofaux2}, the second term on the right hand of \eqref{eq:proofaux1} is bounded by 
\[
C \left(\frac{1}{N-1}\sum_{j: j\neq i}m_j^2 \|Z^j(t_{k-1})\|^2\right)^{1/2}\tau+C\tau^2.
\]
Similarly as we estimate $I_{21}$, this is controlled by $\delta\frac{1}{N}\sum_i m_i\|Z^i(t_{k-1})\|^2+C(\delta)\tau^2$.
The last term on the right hand of \eqref{eq:proofaux1} is controlled by Lemma \ref{thm:consistency} (in particular, equation \eqref{eq:var1}):
\[
\left[\frac{1}{p-1}-\frac{1}{N-1} \right] \|\Lambda_i\|_{\infty} \tau.
\]
Therefore,
\[
I_{22}\le \delta\frac{1}{N}\sum_i m_i\|Z^i(t_{k-1})\|^2+C(\delta)\tau^2
+\left[\frac{1}{p-1}-\frac{1}{N-1} \right] \|\Lambda_i\|_{\infty} \tau.
\]
Moreover,
\[
\|Z^i(t_{k-1})\|^2\le (\|Z^i(t)\|+C\tau)^2\le 2\|Z^i(t)\|^2+2C^2\tau^2.
\]

Finally, taking all those estimates together, one has the following estimate:
\[
\frac{d}{dt}J\le CJ+C(\delta)\tau^2
+\left[\frac{1}{p-1}-\frac{1}{N-1} \right] \|\Lambda_i\|_{\infty} \tau.
\]
The claim then follows by Gr\"onwall's lemma.
\end{proof}

\begin{remark}
The strong convergence can imply the convergence of one marginal distributions. See \cite{jin2020random} for more details. 
The weak convergence below, however, is for empirical measure \eqref{eq:empiricalmeasure}. 
\end{remark}

\section{The weak convergence and Random Batch Method for backward equations}\label{sec:weak}

In practice, one may be more interested in the distributions of the particles
 instead of the trajectories of $X^i(t)$. Hence, the error \eqref{eq:strongerror} is not suitable for this purpose, and we seek to study the weak convergence of the empirical measure \eqref{eq:empiricalmeasure}, as commonly used in the numerical SDE literature \cite{milstein2013stochastic,kloeden2013numerical}.
Roughly speaking, we say a sequence of measures $\mu^N$ converges to some measure $\mu$ weakly if for any suitable test function $\varphi$, it holds that
\[
\int \varphi\, d\mu^N=:\langle \mu^N, \varphi\rangle\to \langle \mu, \varphi\rangle:=\int \varphi\, d\mu.
\]
 
For our problem, we consider the weak convergence of the empirical measure $\mu^N$ defined in \eqref{eq:empiricalmeasure}, which is defined on Borel sets from $\R^d$, to the empirical measure corresponding to \eqref{eq:Nparticlesys} as $\tau\to 0$. Hence, to show that the empirical measures given by $X$ and $\tilde{X}$ are close in law, we pick a test function $\varphi\in C_b^{\infty}(\R^d)$, and hope to show that the weak error defined below is small:
\begin{gather}\label{eq:weakerror}
E_k:=\left|\frac{1}{N}\sum_{i=1}^N \omega_i \E \varphi(\tilde{X}^i(k\tau))
-\frac{1}{N}\sum_{i=1}^N \omega_i \E \varphi(X^i(k\tau)) \right|.
\end{gather} 

\begin{remark}
Traditionally, the test functions used for schemes of numerical
SDEs are those with polynomial growth at infinity \cite{milstein86,milstein2013stochastic}.
We used $C_b^{\infty}$ as the test functions as done nowadays \cite{am11,li2018numerical}, which will induce a weaker topology that disregards high order moments. If one has the moment control, the convergence using these two types of test functions will be the same.
\end{remark}

For the weak convergence, we need some different assumptions on $b$
and $K_{ij}$. 
\begin{assumption}\label{eq:boundedderivatives}
The functions $b,K_{ij}$ are $C^4$ and the derivatives of $b$ and $K_{ij}$ up to order $4$ are uniformly bounded
(uniform in $i,j$).
\end{assumption}

\begin{remark}
Proof of weak convergence using Assumption \ref{ass:kernelfunctions} instead of \ref{eq:boundedderivatives} should also be completed.
Using Assumption \ref{eq:boundedderivatives} makes the proof based on semigroup technique elegant (see the details below).
\end{remark}

We now state the main theorem for the weak convergence of RBM.
\begin{theorem}\label{thm:weakconvergence}
Under Assumption \ref{ass:mass} and Assumption \ref{eq:boundedderivatives}, the Random Batch Method converges weakly with first order for the empirical measure \eqref{eq:empiricalmeasure}. In particular, the weak error defined in \eqref{eq:weakerror} satisfies
\begin{gather}
\sup_{k: k\tau\le T}|E_k|\le C\tau,
\end{gather}
where $C=C(\varphi, T)$ is independent of $N, \tau$, but depends on $\varphi$.
\end{theorem}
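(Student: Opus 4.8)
The plan is to run the backward Kolmogorov (semigroup) comparison on the full $Nd$--dimensional system, the entire difficulty being to keep every constant uniform in $N$. Write $X=(X^1,\dots,X^N)$, $\tilde X=(\tilde X^1,\dots,\tilde X^N)$ and collect the weighted test function into a single function on $\R^{Nd}$,
\[
\Phi(x):=\frac{1}{N}\sum_{i=1}^N\omega_i\,\varphi(x_i),
\]
so that the weak error \eqref{eq:weakerror} is $E_{N_T}=|\E\Phi(\tilde X(T))-\E\Phi(X(T))|$, the two systems sharing the same initial law and driving noise. Let $\cL:=\sum_i F_i(x)\cdot\nabla_{x_i}+\tfrac{\sigma^2}{2}\sum_i\Delta_{x_i}$ be the generator of \eqref{eq:Nparticlesys}, and let $u(x,s):=\E[\Phi(X_s^x)]$ be the solution of the backward equation $\partial_s u=\cL u$, $u(\cdot,0)=\Phi$. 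For a frozen batch division $\mathcal{C}$ the RBM generator is $\cL^{\mathcal{C}}=\cL+\mathcal{X}^{\mathcal{C}}$, where $\mathcal{X}^{\mathcal{C}}:=\sum_i\chi_i(x)\cdot\nabla_{x_i}$ collects exactly the force errors \eqref{eq:errorofforce}. Because the batches at distinct steps are drawn independently, the law of $\tilde X$ on the grid is generated by the single Markov kernel $\tilde P_\tau g:=\E_{\mathcal{C}}\E[g(\tilde X_\tau^{x,\mathcal{C}})]$, while the exact one--step kernel is $P_\tau g:=\E[g(X_\tau^{x})]$. I would then telescope
\[
\E\Phi(X(T))-\E\Phi(\tilde X(T))=\sum_{k=1}^{N_T}\E\Big[\big(P_\tau-\tilde P_\tau\big)\,u\big(\cdot,(N_T-k)\tau\big)\big(\tilde X(t_{k-1})\big)\Big],
\]
so that it suffices to prove the one--step (local) weak error is $O(\tau^2)$ uniformly in $N$; summing $N_T=O(1/\tau)$ copies then gives $O(\tau)$.

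For the local error I would expand both kernels to first order in $\tau$ by Dynkin's formula. The $O(1)$ terms agree trivially, and the crucial cancellation of the $O(\tau)$ terms is exactly the consistency of the scheme: Lemma \ref{thm:consistency} gives $\E_{\mathcal{C}}\mathcal{X}^{\mathcal{C}}=0$, i.e. $\cL=\E_{\mathcal{C}}\cL^{\mathcal{C}}$, so the two first--order generators coincide. What remains is the integral (Dynkin) remainder, which for the two kernels reads
\[
P_\tau u-u-\tau\cL u=\int_0^\tau(\tau-r)P_r\cL^2 u\,dr,\qquad
\tilde P_\tau u-u-\tau\cL u=\E_{\mathcal{C}}\int_0^\tau(\tau-r)\tilde P_r^{\mathcal{C}}(\cL^{\mathcal{C}})^2 u\,dr,
\]
whence the local error is bounded by $\tfrac{\tau^2}{2}\big(\|\cL^2 u\|_\infty+\E_{\mathcal{C}}\|(\cL^{\mathcal{C}})^2 u\|_\infty\big)$. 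Since $\cL^2$ and $(\cL^{\mathcal{C}})^2$ involve spatial derivatives up to fourth order, Assumption \ref{eq:boundedderivatives} ($C^4$ bounds on $b,K_{ij}$) is precisely what makes these quantities finite; the whole matter is now reduced to showing they are $O(1)$ \emph{uniformly in $N$}.

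This uniform bound is the heart of the argument and the main obstacle, and it hinges on sharp, $N$--uniform estimates for the derivatives of the backward solution, which I would isolate as a separate proposition: because $\Phi$ carries the prefactor $1/N$ and is a sum of one--body functions, one should have
\[
\|\nabla_{x_i}u\|_\infty=O\!\Big(\tfrac1N\Big),\quad
\|\nabla_{x_i}^2 u\|_\infty=O\!\Big(\tfrac1N\Big),\quad
\|\nabla_{x_i}\nabla_{x_{i'}}u\|_\infty=O\!\Big(\tfrac1{N^2}\Big)\ (i\neq i'),
\]
with analogous decay up to fourth order. Granting these, a direct inspection of $\cL^2 u$ shows that every block sum is $O(1)$: for instance $\sum_{i,i'}F_iF_{i'}:\nabla_{x_i}\nabla_{x_{i'}}u$ contributes $\sum_i O(1)O(\tfrac1N)$ on the diagonal and $\sum_{i\neq i'}O(1)O(\tfrac1{N^2})$ off the diagonal, both $O(1)$. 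The delicate point is the batched operator $(\cL^{\mathcal{C}})^2 u$: unlike the mean--field drift of \eqref{eq:Nparticlesys}, whose cross derivatives $\nabla_{x_j}F_i$ ($j\neq i$) are $O(1/N)$, the batched drift $\tilde F_i^{\mathcal{C}}$ has $O(1)$ coupling to its $p-1$ batch--mates; nevertheless, since a particle has only $p-1$ such partners, the extra strength is compensated by the reduced count, and combining this with the above derivative decay again yields $\E_{\mathcal{C}}\|(\cL^{\mathcal{C}})^2 u\|_\infty=O(1)$. Finally, to establish the derivative proposition itself I would differentiate the stochastic flow: the first and second variations $\partial X_s^j/\partial x_i$ and $\partial^2 X_s^j/\partial x_i\partial x_{i'}$ solve linear (matrix) equations whose coupling coefficients inherit the $1/(N-1)$ normalization of \eqref{eq:Nparticlesys}, so that index $i$ influences particle $j\neq i$ only through an $O(1/N)$--weighted channel. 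A Gronwall estimate on these variation equations propagates the $1/N$ and $1/N^2$ decay from the data $\Phi$ to $u(\cdot,s)$ uniformly on $[0,T]$, and this uniform--in--$N$ propagation — rather than the semigroup bookkeeping — is where the real work lies.
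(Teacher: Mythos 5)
Your proposal is correct and follows essentially the same route as the paper: the same telescoping of one-step kernels along the RBM trajectory, the same cancellation of the $O(\tau)$ terms via the consistency $\E_{\mathcal{C}}\cL^{\mathcal{C}}=\cL$ (Lemma \ref{thm:consistency}), and the same control of the $O(\tau^2)$ Dynkin remainders through the $N$-uniform derivative decay $\|\nabla_{x_i}u\|_\infty=O(1/N)$, $\|\nabla_{x_i}\nabla_{x_j}u\|_\infty=O(1/N^2)$, which is exactly the paper's Proposition \ref{pro:boundofderivatives}. The only divergence is in how that auxiliary proposition is established --- the paper differentiates the backward PDE and uses $L^\infty$-nonexpansiveness of $e^{t\cL}$ plus Gr\"onwall, while you propose Gr\"onwall on the first and second variation equations of the stochastic flow; both work (the noise being additive), and your explicit observation that $(\cL^{\mathcal{C}})^2u$ remains $O(1)$ because the $O(1)$ intra-batch coupling is offset by having only $p-1$ batch-mates is a point the paper leaves implicit in Lemma \ref{lmm:generatorofu}.
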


Usually, in numerical SDEs, to prove the weak convergence, one makes use of the backward Kolmogorov equation ("backward equation" for short)
which is defined in the same Euclidean space. For our problem, we need to lift the Euclidean space from $\R^d$ to $\R^{Nd}$. In particular, define
\begin{gather}\label{eq:u}
u(x, t):= \frac{1}{N}\sum_{i=1}^N \omega_i \E [\varphi(X^i(t)) | X(0)=x]
\end{gather}
where $x\in \R^{Nd}$ and $X$ is the solution to  \eqref{eq:Nparticlesys}.
Then, we make use of this function to study the weak convergence. This function $u$ satisfies the following backward equation \cite{oksendal03}
\begin{gather}\label{eq:backward}
\partial_t u=\cL u: =\sum_{i=1}^N \left[b(x_i)+\frac{1}{N-1}\sum_{j: j\neq i} m_j K_{ij}(x_i, x_j)\right]\cdot\nabla_{x_i}u+\sum_{i=1}^N \frac{1}{2}\sigma^2\Delta_{x_i}u.
\end{gather}
The operator $\mathcal{L}$ is called the generator of the ODE/SDE \eqref{eq:Nparticlesys}. The Laplacian $\Delta_{x_i}$ is given by
\begin{gather}
\Delta_{x_i}:=\sum_{j=1}^d \partial_{x_i^{(j)}x_i^{(j)}},
~~x_i=(x_i^{(1)},\cdots, x_i^{(d)})\in \R^d.
\end{gather}
The solution semigroup for \eqref{eq:backward} will be denoted by
\begin{gather}
e^{t\mathcal{L}}u(\cdot, 0):=u(x,t).
\end{gather}
By the well-known property of backward equation \cite{oksendal03,li2020large}, one has
\begin{gather}\label{eq:semigroup}
\|u(\cdot, t)\|_{\infty}=\|e^{t\mathcal{L}}u(\cdot, 0)\|_{\infty}\le \|u(\cdot, 0)\|_{\infty}.
\end{gather}

The function $u$ is defined on $\R^{Nd}$, and naive estimates of the norms for the derivatives will depend on $N$. This is not sufficient for us to show the $N$ independent of weak convergence for the empirical measure.  In fact, it is clear that
\begin{gather}
\|u\|_{\infty}\le C,
\end{gather}
independent of $N$. The following proposition then gives the crucial estimates of the derivatives of $u$.
\begin{proposition}\label{pro:boundofderivatives}
For any $i$ and $j\neq i$, one has
\begin{gather}
\|\nabla_{x_i} u\|_{\infty}+\|\nabla^2_{x_i} u\|_{\infty}+\|\nabla_{x_i}^3u\|_{\infty}+\|\nabla_{x_i}^4u\|_{\infty}\le C(T)\frac{1}{N},
\end{gather}
and
\begin{gather}
\|\nabla_{x_i}\nabla_{x_j}u\|_{\infty}
+\|\nabla_{x_i}\nabla^2_{x_j}u\|_{\infty}+\|\nabla^2_{x_i}\nabla^2_{x_j}u\|_{\infty} \le C(T)\frac{1}{N^2}.
\end{gather}
\end{proposition}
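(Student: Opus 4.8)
The plan is to control the spatial derivatives of $u(x,t)$ by differentiating the backward equation \eqref{eq:backward} and deriving equations for the derivative quantities, then applying the maximum-principle/semigroup bound \eqref{eq:semigroup} together with Grönwall's inequality. The essential mechanism is that each factor of $\frac{1}{N}$ in the stated bounds tracks the origin of the derivative: the prefactor $\frac{1}{N}\sum_i \omega_i$ in the definition \eqref{eq:u} of $u$ means that a single derivative $\nabla_{x_i}u$ only ``sees'' the $i$-th summand directly (giving $O(1/N)$), while the coupling through the $\frac{1}{N-1}\sum_j m_j K_{ij}$ terms in $\mathcal{L}$ is itself weighted by $\frac{1}{N}$, so that creating a dependence of the $i$-th trajectory on a second variable $x_j$ costs another factor of $\frac{1}{N}$, which is the source of the $O(1/N^2)$ mixed bounds.

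\textbf{First derivatives.} I would set $v^{(i)}:=\nabla_{x_i}u$ and differentiate \eqref{eq:backward} with respect to $x_i$. Because $\mathcal{L}$ has variable coefficients depending on the kernel $K_{\ell m}$ and on $b$, the derivative $v^{(i)}$ satisfies an equation of the form $\partial_t v^{(i)}=\mathcal{L}v^{(i)}+(\text{lower-order terms})$, where the lower-order terms couple $v^{(i)}$ to the other first derivatives $v^{(\ell)}$, $\ell\neq i$, with coefficients that are $O(1/N)$ (coming from $\frac{1}{N-1}m_\ell \nabla K$, each bounded by Assumption \ref{eq:boundedderivatives} and Assumption \ref{ass:mass}). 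The initial datum is $v^{(i)}(\cdot,0)=\frac{1}{N}\omega_i\nabla_{x_i}\varphi(x_i)$, which is already $O(1/N)$ and uniformly bounded. The strategy is to form the weighted sum $\sum_\ell \|v^{(\ell)}(\cdot,t)\|_\infty$ (or, more robustly, a vector of these sup-norms) and show by Grönwall that it stays $O(1/N)$: the off-diagonal coupling coefficients are $O(1/N)$ but there are $O(N)$ of them, so the coupling contributes an $O(1)$ rate, giving controlled exponential growth in $T$ without destroying the $1/N$ scaling of the initial data. The analogous argument for $\nabla_{x_i}^2u,\nabla_{x_i}^3u,\nabla_{x_i}^4u$ proceeds inductively: differentiating again produces source terms that are products of lower-order derivatives already controlled at the previous step, again with $O(1/N)$ coefficients, so the same Grönwall closure yields $O(1/N)$.

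\textbf{Mixed derivatives.} For $w^{(ij)}:=\nabla_{x_i}\nabla_{x_j}u$ with $j\neq i$, the key observation is that the \emph{initial datum vanishes}: $\nabla_{x_i}\nabla_{x_j}\varphi(x_\ell)=0$ for $i\neq j$ since each summand in $u(\cdot,0)$ depends on a single variable. Thus $w^{(ij)}$ is driven purely by source terms generated inside the evolution. Differentiating the equation for $v^{(i)}=\nabla_{x_i}u$ once more in $x_j$, the source terms that create a genuine $x_i$--$x_j$ coupling arise from the coefficient $\frac{1}{N-1}m_j\nabla K_{ij}$ acting on the already-$O(1/N)$ quantity $\nabla_{x_i}u$ (and symmetric terms). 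Each such source is therefore $O(1/N)\cdot O(1/N)=O(1/N^2)$. Running Grönwall on the weighted collection $\{\|w^{(ij)}\|_\infty\}$, with the first-derivative bounds substituted into the source and with zero initial data, then propagates this $O(1/N^2)$ scaling for all time up to $T$. The higher mixed derivatives $\nabla_{x_i}\nabla_{x_j}^2u$ and $\nabla_{x_i}^2\nabla_{x_j}^2u$ are handled by the same bootstrap, using the pure derivatives and the lower mixed derivatives already bounded.

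\textbf{The main obstacle} I anticipate is the bookkeeping in the Grönwall step: because the equations for the derivative quantities form a large coupled system indexed by $i$ (and pairs $i,j$), one must set up the right scalar Lyapunov quantity—a suitably weighted sum of sup-norms—so that the $O(N)$ summation over off-diagonal couplings exactly compensates the $O(1/N)$ size of each coupling coefficient and produces an $N$-independent Grönwall rate. Getting the weights and the counting right, so that the closure is genuinely uniform in $N$ and the initial $1/N$ (resp. $1/N^2$) scaling survives, is the delicate part; the pointwise differentiation and the bounds on individual terms are routine given Assumption \ref{eq:boundedderivatives} and the semigroup estimate \eqref{eq:semigroup}.
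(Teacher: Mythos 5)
Your proposal follows essentially the same route as the paper's Appendix B: differentiate the backward equation \eqref{eq:backward}, use Duhamel's formula with the $L^{\infty}$-nonexpansive semigroup \eqref{eq:semigroup}, observe that the off-diagonal couplings are $O(1/N)$ but $O(N)$ in number so the Gr\"onwall rate is $N$-independent, and let the $1/N$ (resp.\ vanishing) initial data together with the $O(1/N^2)$ sources produce the stated scalings. The mechanism you identify for the $1/N^2$ mixed bounds (zero initial data plus sources of size $\frac{1}{N-1}\cdot O(1/N)$) is exactly the paper's.

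One structural point in your ordering would need repair when written out. You propose to handle the pure derivatives $\nabla_{x_i}^2u,\dots$ ``inductively'' before touching the mixed ones, claiming the sources are ``products of lower-order derivatives already controlled at the previous step.'' That is not quite true: the equation for $\nabla_{x_\ell}^2u$ contains the same-order term $\frac{1}{N-1}\sum_{i\neq\ell} m_\ell\,\nabla_{x_\ell}K_{i\ell}\cdot\nabla_{x_i}\nabla_{x_\ell}u$, i.e.\ $O(N)$ mixed second derivatives with $O(1/N)$ coefficients, so the pure-Hessian Gr\"onwall does not close by itself. Conversely, the equation for $\nabla_{x_\ell}\nabla_{x_q}u$ ($\ell\neq q$) contains the $j=\ell$ and $j=q$ terms of $\frac{1}{N-1}\sum_j A_{j\ell}\cdot\nabla_{x_j}\nabla_{x_\ell}u$, which involve the diagonal Hessians; to see that these contribute only $O(1/N^2)$ you must already know $\|\nabla_{x_\ell}^2u\|_\infty=O(1/N)$. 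The paper resolves this with a two-pass argument at each derivative order: first run Gr\"onwall on \emph{all} second derivatives simultaneously to get the crude bound $O(1/N)$ for every pair $(\ell,q)$, and only then revisit the off-diagonal equations, split off the $j=\ell,\,j=q$ terms (now known to be $O(1/N^2)$ after the $\frac{1}{N-1}$ prefactor), and rerun Gr\"onwall with zero initial data to upgrade to $O(1/N^2)$. Your closing remark about choosing the right Lyapunov quantity shows you anticipated the coupling, but the specific fix is this crude-then-refined bootstrap rather than a single weighted sum.
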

Here, $\nabla_{x_i}^2$ is the Hessian matrix. Similarly, 
$\nabla_{x_i}^4u$ is the fourth order tensor with derivatives 
of the form $(\prod_{k=1}^4\partial_{x_i^{(j_k)}})u$ and $j_k\in\{1,\cdots, d\}$. The norm $\|\nabla_{x_i}^4u\|_{\infty}$
is understood as
\[
\sup_{x}\left[\sum_{j_1, j_2, j_3, j_4}\Big(\big(\prod_{k=1}^4\partial_{x_i^{(j_k)}}\big)u\Big)^2\right]^{1/2}.
\]
The proof is kind of tedious and is given in Appendix \ref{app:uderi}.

For further discussion, we introduce the generator corresponding to the 
Random Batch Method. For $t\in (t_{k-1}, t_k]$
\begin{gather*}
\cL_C:= \sum_{i=1}^N \left[b(x_i)+\frac{1}{p-1}\sum_{j: j\neq i} I_{ij}^{(k-1)} m_j K_{ij}(x_i, x_j)\right]\cdot\nabla_{x_i}
+\sum_{i=1}^N \frac{1}{2}\sigma^2\Delta_{x_i}.
\end{gather*}
We recall that $I_{ij}$ is the indicator for $i, j$ being in the same batch,
and we use $I_{ij}^{(k-1)}$ to mean the indicator corresponding to $\mathcal{C}^{(k-1)}$.

The importance of Proposition \ref{pro:boundofderivatives} is that one can bound $\cL^i u$ uniformly in $N$, where $\cL^i$ means the composition of $\cL$ for $i$ times (if $i=0$, it is the identity operator). This is crucial for establishing the weak convergence uniformly in $N$. In particular, we have the following.
\begin{lemma}\label{lmm:generatorofu}
For the function $u$ defined in \eqref{eq:u}, it holds that for $i=0,1,2$ that
\begin{gather}
\|\mathcal{L}^iu\|\le C,~~~\|\mathcal{L}_{\mathcal{C}}^iu\|\le C
\end{gather}
Moreover,
\begin{gather}
\E\mathcal{L}_{\mathcal{C}}=\mathcal{L}.
\end{gather}
\end{lemma}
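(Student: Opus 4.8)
The plan is to treat the three assertions separately, disposing of the consistency identity first and then the two families of norm bounds together, since they rest on the same mechanism. For $\E\mathcal{L}_{\mathcal{C}}=\mathcal{L}$ I would simply take the expectation of $\mathcal{L}_{\mathcal{C}}$ over the random division $\mathcal{C}^{(k-1)}$. The only random coefficients are the indicators $I_{ij}^{(k-1)}$, and by Lemma \ref{lmm:estimatesofindicator} we have $\E I_{ij}=\frac{p-1}{N-1}$ for $i\neq j$; hence $\E\big[\frac{1}{p-1}I_{ij}^{(k-1)}m_j K_{ij}\big]=\frac{1}{N-1}m_j K_{ij}$, while the $b$-term and the Laplacian term are deterministic and unchanged. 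Summing over $i,j$ turns the batch drift into the full drift, giving $\E\mathcal{L}_{\mathcal{C}}=\mathcal{L}$ as an identity of operators; applied to the (deterministic) function $u$, this yields the stated relation.

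For the norm bounds I would read $\|\cdot\|$ through the uniform-in-$x$ supremum norm, noting that under Assumption \ref{eq:boundedderivatives} the coefficients $F_i$ and their relevant derivatives are bounded uniformly in $i$ and $N$ (if $b$ or $K_{ij}$ only have controlled growth, one instead evaluates along the process and invokes the moment bounds \eqref{eq:uniformmoments}; the combinatorics below are identical). The case $i=0$ is the already-noted bound $\|u\|_{\infty}\le C$. For $i=1$, I write $\mathcal{L}u=\sum_i F_i\cdot\nabla_{x_i}u+\frac{1}{2}\sigma^2\sum_i\Delta_{x_i}u$ and estimate each summand using Proposition \ref{pro:boundofderivatives}: since $\|\nabla_{x_i}u\|_{\infty}$ and $\|\nabla_{x_i}^2u\|_{\infty}$ are $O(1/N)$ while $|F_i|=O(1)$, each of the $2N$ terms is $O(1/N)$, so $\|\mathcal{L}u\|_{\infty}=O(1)$ uniformly in $N$. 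The same bookkeeping applies to $\mathcal{L}_{\mathcal{C}}$, whose batch drift is an average of $p-1$ bounded terms and hence again $O(1)$.

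The substantive case is $i=2$, which is where the sharp cross-derivative bounds of Proposition \ref{pro:boundofderivatives} are essential. Applying $\mathcal{L}$ to $\mathcal{L}u$ produces, after the product and Leibniz rules, a double sum over particle blocks $(i,j)$ of terms of the shapes $F_j\cdot(\nabla_{x_j}F_i)\cdot\nabla_{x_i}u$, $F_j^{\top}(\nabla_{x_j}\nabla_{x_i}u)F_i$, and the higher mixed-derivative terms $F_j\cdot\nabla_{x_j}\Delta_{x_i}u$, $\Delta_{x_j}\Delta_{x_i}u$, etc. I would split each sum into its diagonal ($i=j$) and off-diagonal ($i\neq j$) parts. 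On the diagonal there are only $O(N)$ terms, each carrying a single-block derivative of $u$ of size $O(1/N)$, so the diagonal contributes $O(1)$. Off the diagonal there are $O(N^2)$ terms, and the key point is that each carries either a genuine mixed derivative $\nabla_{x_i}\nabla_{x_j}u,\ \nabla^2_{x_i}\nabla_{x_j}u,\ \nabla^2_{x_i}\nabla^2_{x_j}u$ (bounded by $O(1/N^2)$ by the proposition) or else a coefficient derivative $\nabla_{x_j}F_i=\frac{m_j}{N-1}\nabla_{x_j}K_{ij}=O(1/N)$ multiplying an $O(1/N)$ single-block derivative of $u$; in either case each off-diagonal term is $O(1/N^2)$, and $O(N^2)$ of them sum to $O(1)$. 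This exact cancellation of the $N^2$ growth against the $N^{-2}$ decay is the main obstacle, and it is precisely what the second group of estimates in Proposition \ref{pro:boundofderivatives} is designed to supply.

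Finally I would repeat the computation for $\mathcal{L}_{\mathcal{C}}^2$, the only change being that every derivative of the batch drift, $\nabla_{x_j}F_i^{\mathcal{C}}=\frac{1}{p-1}I_{ij}m_j\nabla_{x_j}K_{ij}$, is supported on same-batch pairs. Thus the coefficient-derivative terms run over only the $N(p-1)$ pairs with $I_{ij}=1$, each weighted by $\frac{1}{p-1}$, yielding $N(p-1)\cdot\frac{1}{p-1}\cdot O(1/N)=O(1)$ uniformly in $p$; the terms carrying mixed derivatives of $u$ are bounded exactly as before, since those bounds are intrinsic to $u$ and indifferent to the batch structure. Collecting the diagonal and off-diagonal estimates gives $\|\mathcal{L}_{\mathcal{C}}^2u\|_{\infty}\le C$ for every realization of the batches, hence also in $L^2(\bbP)$, which completes the proof. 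The routine product-rule expansions and the verification that no term escapes the $O(1/N^2)$ off-diagonal bound are the only tedious parts.
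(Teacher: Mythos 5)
Your proposal is correct and follows essentially the same route as the paper: the identity $\E\mathcal{L}_{\mathcal{C}}=\mathcal{L}$ is exactly the consistency statement of Lemma \ref{thm:consistency} (via $\E I_{ij}=\frac{p-1}{N-1}$), and the norm bounds are, as the paper states, a corollary of Proposition \ref{pro:boundofderivatives}. The paper leaves the term-counting for $\mathcal{L}^2u$ and $\mathcal{L}_{\mathcal{C}}^2u$ implicit, and your diagonal/off-diagonal bookkeeping --- $O(N)$ terms of size $O(1/N)$ plus $O(N^2)$ terms of size $O(1/N^2)$ --- is precisely the intended argument.
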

\begin{proof}
The first assertion is a corollary of Proposition \ref{pro:boundofderivatives}.
The second claim is the one proved in Lemma \ref{thm:consistency}.
\end{proof}

To go further, we need to introduce a semigroup associated to RBM (see also \cite{feng2018} for the semigroup used in SGD).
Consider the transition $\tilde{X}(t_{k-1})\to \tilde{X}(t_k)$, it is clear that this transition gives a time homogeneous Markov chain. Define the operator $\mathcal{S}^{(k)}: C_b(\R^{Nd})\to C_b(\R^{Nd})$ as
\begin{gather}
\mathcal{S}^{(k)}\phi:=\E [\phi(\tilde{X}(k\tau))| \tilde{X}(0)=x].
\end{gather}
Below, we sometimes use $\E_x$ to mean $\E (\cdot |\tilde{X}(0)=x)$ for convenience. Then, by the Markov property \cite{durrett2010}, it can be shown that
\begin{gather}\label{eq:Ssemigroup}
\mathcal{S}^{(k)}=\mathcal{S}^k:=\mathcal{S}\circ  \cdots \circ \mathcal{S}.
\end{gather}
In fact, for any test function $\phi$, we let $u^k:=(\mathcal{S}^{(k)}\phi$. Then, it holds that
\[
\begin{split}
u^k(x)&=\E[\phi(\tilde{X}(t_k)) | \tilde{X}(0)=x]\\
&=\E[ \E[\phi(\tilde{X}(t_k)) | \tilde{X}(t_1)] | \tilde{X}(0)=x ] \\
&=\E[ u^{k-1}(\tilde{X}(t_1)) |  \tilde{X}(0)=x],
\end{split}
\]
where the third equality is by the Markov property, namely
\[
\mathcal{S}^{(k)}=\mathcal{S}\circ\mathcal{S}^{(k-1)}.
\]
Clearly, $\mathcal{S}^k$ are nonexpansive in $L^{\infty}(\R^{Nd})$, i.e.
\begin{gather}\label{eq:linfitynonexpansive}
 \| \mathcal{S}^k \phi\|_{\infty}\le \|\phi\|_{\infty}.
\end{gather}

We give the proof of the weak convergence.
\begin{proof}[Proof of Theorem \ref{thm:weakconvergence}]

The weak error in \eqref{eq:weakerror} is to compare $u$ in \eqref{eq:u} and $\mathcal{S}^n\phi$ for test function
\begin{gather}
\phi(x)=\frac{1}{N}\sum_{i=1}^N \omega_i \varphi(x_i).
\end{gather}

By Equation \eqref{eq:Ssemigroup} ($\mathcal{S}^{(k)}=\mathcal{S}^k$), we need to estimate
\[
\mathcal{S}^n\phi(x)-u(x, n\tau)=\sum_{k=1}^n \Big[ \mathcal{S}^k u(x, (n-k)\tau)-\mathcal{S}^{k-1} u(x, (n-k+1)\tau) \Big].
\]
Clearly, by \eqref{eq:linfitynonexpansive}, one has
\[
|\mathcal{S}^n\phi(x)-u(x, n\tau)|\le \sum_{k=1}^n \|\mathcal{S}u(x, (n-k)\tau)-u(x, (n-k+1)\tau)\|_{\infty}
\]

By the definition of $S$, one has
\[
\begin{split}
\mathcal{S}u(x, (n-k)\tau) &=\E_x u(\tilde{X}(\tau), (n-k)\tau ) \\
&=\E_{\mathcal{C}} \E_x( u(\tilde{X}(\tau), (n-k)\tau) | \mathcal{C}) \\
&=\E_{\mathcal{C}} e^{\tau\mathcal{L}_{\mathcal{C}}}u(x, (n-k)\tau ).
\end{split}
\]
The second equality means that we fix a division of batches to compute the expectation with respect to the Brownian motions, and then average out about the random batches. For the last equality, we recall that $e^{\tau\mathcal{L}_{\mathcal{C}}}$ means the solution semigroup by $\partial_t v=\mathcal{L}_{\mathcal{C}}v$. 

Note 
\begin{multline*}
e^{\tau\mathcal{L}_{\mathcal{C}}}u(x, (n-k)\tau )
=u(x, (n-k)\tau)+\tau\mathcal{L}_{\mathcal{C}}u(x, (n-k)\tau)\\
+\int_0^{\tau}(\tau-z)\mathcal{L}_{\mathcal{C}}^2e^{z\mathcal{L}_{\mathcal{C}}}u(x, (n-k)\tau)\,dz.
\end{multline*}
For the remainder term, by Lemma \ref{lmm:generatorofu}, it holds for every partition of batches that 
\[
\begin{split}
\|\mathcal{L}_{\mathcal{C}}^2e^{z\mathcal{L}_{\mathcal{C}}}u(x, (n-k)\tau)\|_{\infty}
&=\|e^{z\mathcal{L}_{\mathcal{C}}}\mathcal{L}_{\mathcal{C}}^2u(x, (n-k)\tau)\|_{\infty} \\
& \le \|\mathcal{L}_{\mathcal{C}}^2u(x, (n-k)\tau)\|_{\infty}\le C.
\end{split}
\]
Hence,
\[
\|\mathcal{S}u(x, (n-k)\tau)-u(x, (n-k)\tau)-\E\tau\mathcal{L}_{\mathcal{C}}u(x, (n-k)\tau)\|_{\infty}\le C\tau^2,
\]
with $C$ independent of $N$. Here, we used the fact that $e^{\tau\mathcal{L}_{\mathcal{C}}}$ is nonexpansive in $L^{\infty}$, similar as in \eqref{eq:semigroup}.

Similarly,
\[
\begin{split}
&u(x, (n-k+1)\tau) =e^{\tau \mathcal{L}}u(x, (n-k)\tau)\\
&=u(x, (n-k)\tau)+\tau\mathcal{L}u(x, (n-k)\tau)
+\int_0^{\tau}(\tau-z)\mathcal{L}^2e^{z\mathcal{L}}u(x, (n-k)\tau)\,dz.
\end{split}
\]
The remainder term is again bounded by $C\tau^2$.
Since
\[
\E\mathcal{L}_{\mathcal{C}}=\mathcal{L},
\]
one finds that
\[
\|\mathcal{S}u(x, (n-k)\tau)-u(x, (n-k+1)\tau)\|_{\infty}\le C\tau^2,
\]
with $C$ independent of $N$. 

Hence,
\[
|\mathcal{S}^n\phi(x)-u(x, n\tau)|\le \sum_{k=1}^n \|\mathcal{S}u(x, (n-k)\tau)-u(x, (n-k+1)\tau)\|_{\infty}\le CN\tau^2\le C\tau.
\] 
The claim is then proved.
\end{proof}

\begin{remark}
This result is reminiscent of the results by Golse et al. \cite{golse2019random}, where the average of the one marginal density matrices for $N$ body quantum system has been used for the Random Batch Method, and the convergence rate is also $O(\tau)$ under a certain weak norm.
\end{remark}

The backward equation for the Random Batch system is given by
\begin{gather}\label{eq:backwardrbm}
\partial_t \tilde{u}= \sum_{i=1}^N \left[b(x_i)+\frac{1}{p-1}\sum_{j: j\neq i}I_{ij}^{(k-1)} m_j K_{ij}(x_i, x_j)\right]\cdot\nabla_{x_i}
\tilde{u}+\sum_i \frac{1}{2}\sigma^2\Delta_{x_i}\tilde{u}.
\end{gather}

Hence, Theorem \ref{thm:weakconvergence} in fact says the following result when we apply random batch method to backward equations or Liouville equations ($\sigma=0$).
\begin{theorem}
Consider the backward equation \eqref{eq:backward} and its corresponding equation using Random Batch Method \eqref{eq:backwardrbm}. If the initial value is given by
\begin{gather}
u|_{t=0}=\tilde{u}|_{t=0}=\frac{1}{N}\sum_i \omega_i \varphi(x_i),
\end{gather}
then it holds that
\begin{gather}
\sup_{t\le T}\|u(t)-\tilde{u}(t)\|_{\infty}\le C(T)\tau.
\end{gather}
\end{theorem}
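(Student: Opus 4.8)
The plan is to recognize this statement as essentially a reformulation of Theorem \ref{thm:weakconvergence}, whose proof already delivered a pointwise-in-$x$ (hence $L^\infty$) estimate rather than merely a bound on the weighted average $E_k$. I read $\tilde u$ as the expectation over the batch realizations $\mathcal{C}^{(0)},\mathcal{C}^{(1)},\dots$ of the solution to \eqref{eq:backwardrbm} with initial datum $\phi(x)=\frac{1}{N}\sum_{i=1}^N\omega_i\varphi(x_i)$; this is the natural object, since the first-order rate hinges on the cancellation $\E\mathcal{L}_{\mathcal{C}}=\mathcal{L}$ and cannot be expected to hold for a single pathwise realization. With this reading the only genuinely new task is to identify the batch-averaged backward solution $\tilde u$ with the iterated semigroup $\mathcal{S}^k\phi$, after which the result drops out of estimates already assembled.

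First I would make that identification. On each interval $(t_{k-1},t_k]$ the random solution of \eqref{eq:backwardrbm} evolves by $e^{\tau\mathcal{L}_{\mathcal{C}^{(k-1)}}}$, so at a grid point $\tilde u(\cdot,t_k)=e^{\tau\mathcal{L}_{\mathcal{C}^{(k-1)}}}\cdots e^{\tau\mathcal{L}_{\mathcal{C}^{(0)}}}\phi$ for a fixed realization. Because the divisions at distinct time steps are independent, conditioning on $\mathcal{C}^{(0)},\dots,\mathcal{C}^{(k-2)}$ and averaging the innermost factor gives $\E_{\mathcal{C}^{(k-1)}}[e^{\tau\mathcal{L}_{\mathcal{C}^{(k-1)}}}g]=\mathcal{S}g$ for any $g$ measurable with respect to the earlier divisions; iterating the tower property yields $\E\,\tilde u(\cdot,t_k)=\mathcal{S}^k\phi$. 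This is exactly relation \eqref{eq:Ssemigroup} read at the level of functions.

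With this in hand the claim at grid points is immediate: the proof of Theorem \ref{thm:weakconvergence} shows $\|\mathcal{S}^k\phi-u(\cdot,k\tau)\|_\infty\le Ck\tau^2\le C(T)\tau$ uniformly in $N$, since each one-step error is $O(\tau^2)$ and there are at most $T/\tau$ of them. To pass from the grid to an arbitrary $t=t_{k-1}+s$, $s\in[0,\tau)$, I would split $\tilde u(\cdot,t)-u(\cdot,t)$ into $\mathcal{S}_s[\tilde u(\cdot,t_{k-1})-u(\cdot,t_{k-1})]$ plus $[\mathcal{S}_s-e^{s\mathcal{L}}]u(\cdot,t_{k-1})$, where $\mathcal{S}_s:=\E\,e^{s\mathcal{L}_{\mathcal{C}}}$ is the partial-step averaged semigroup. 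The first piece is bounded by $\|\tilde u(\cdot,t_{k-1})-u(\cdot,t_{k-1})\|_\infty\le C(T)\tau$ via the $L^\infty$-nonexpansiveness of $\mathcal{S}_s$ (which follows from \eqref{eq:semigroup}, \eqref{eq:linfitynonexpansive}) and the grid estimate; the second is a single consistency error, $O(\tau^2)$, by Taylor expansion together with $\E\mathcal{L}_{\mathcal{C}}=\mathcal{L}$ and the exact-solution bounds $\|\mathcal{L}^2u\|_\infty,\|\mathcal{L}_{\mathcal{C}}^2u\|_\infty\le C$ of Lemma \ref{lmm:generatorofu}. Summing gives $\sup_{t\le T}\|u(t)-\tilde u(t)\|_\infty\le C(T)\tau$. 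The Liouville case $\sigma=0$ needs no change, because every ingredient ($L^\infty$-nonexpansiveness and the generator bounds) is valid for all $\sigma\ge0$.

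The place where all the real work sits is uniformity in $N$: every one-step remainder is controlled through $\|\mathcal{L}^2u\|_\infty$ and $\|\mathcal{L}_{\mathcal{C}}^2u\|_\infty$, which stay bounded independently of $N$ \emph{only} because the sharp derivative bounds of Proposition \ref{pro:boundofderivatives} (the $1/N$ and $1/N^2$ decay of $\nabla_{x_i}u$ and $\nabla_{x_i}\nabla_{x_j}u$) exactly compensate the $N$-fold and $N^2$-fold sums generated by squaring $\mathcal{L}$ and $\mathcal{L}_{\mathcal{C}}$. Once Proposition \ref{pro:boundofderivatives} is granted, nothing else in the argument depends on $N$, and the theorem is a direct corollary of the estimates behind Theorem \ref{thm:weakconvergence}; the only obstacle beyond bookkeeping is getting the averaging identification $\E\,\tilde u=\mathcal{S}^k\phi$ cleanly, which the independence of the batch divisions across time steps supplies.
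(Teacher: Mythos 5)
Your proposal is correct and follows essentially the same route as the paper, which presents this theorem as an immediate corollary of the proof of Theorem \ref{thm:weakconvergence}: the one-step consistency estimate $\|\mathcal{S}u(\cdot,(n-k)\tau)-u(\cdot,(n-k+1)\tau)\|_{\infty}\le C\tau^2$ combined with the nonexpansiveness of $\mathcal{S}$ and $e^{\tau\mathcal{L}}$ in $L^{\infty}$, all made uniform in $N$ by Proposition \ref{pro:boundofderivatives}. Your added care in identifying the batch-averaged solution of \eqref{eq:backwardrbm} with $\mathcal{S}^k\phi$ via independence of the divisions, and in handling off-grid times $t\in(t_{k-1},t_k)$, fills in details the paper leaves implicit but does not change the argument.
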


\begin{remark}
For general initial data, the approximation in $L^{\infty}$ given by random batch method for backward equation (Liouville equation when $\sigma=0$) can not be uniform in $N$.
\end{remark}

\begin{remark}
The situation can be different if one considers the Liouville equation of second order system for the density distribution \cite{jin2005hamiltonian}. Clearly, in this case, we cannot use the $L^{\infty}$ norm to gauge the difference. Instead, a certain weak norm for the combination of one marginals should be considered as in \cite{golse2019random}. This is left for future.
\end{remark}

\section*{Acknowledgement}
S. Jin was partially supported by the NSFC grant No. 31571071.  The work of L. Li was partially sponsored by NSFC 11901389, Shanghai Sailing Program 19YF1421300 and NSFC 11971314. The work of J.-G. Liu was partially supported by KI-Net NSF RNMS11-07444 and NSF DMS-1812573.

\appendix

\section{Proof of Lemma \ref{lmm:conditionalest}}\label{app:conditionalmoments}

\begin{proof}
The first part is similar as in \cite{jin2020random} except that for $b(\cdot)$ terms, we need to use the one-sided Lipschitz condition. Hence, we omit the proof.

We now prove \eqref{eq:conditionalincrements}.
Consider a realization so that the equation is written as
\[
d\tilde{X}^i(t)=b(\tilde{X}^i)\,dt+\frac{1}{p-1}\sum_{j\in \mathcal{C}_{\theta},j\neq i}m_j K_{ij}(\tilde{X}^i, \tilde{X}^{j})
+\sigma dB^i,
\]
where $\mathcal{C}_{\theta}$ again  is the random batch that contains $i$ from the random division at $t_{k-1}$, or $\mathcal{C}^{(k-1)}$.  It follows that
\begin{gather*}
\begin{split}
\mathbb{E}\Big(\tilde{X}^i(t)-\tilde{X}^i(t_{k-1}) \Big| \mathcal{F}_{k-1}\Big)
=& \int_{t_{k-1}}^t\mathbb{E}(b(\tilde{X}^i)| \mathcal{F}_{k-1})\,ds \\
 &+\int_{t_{k-1}}^t\mathbb{E}\left( \frac{1}{p-1}\sum_{j\in \mathcal{C}_{\theta},j\neq i}m_j K_{ij}(\tilde{X}^i,\tilde{X}^{j}) \Big| \mathcal{F}_{k-1}\right)\,ds.
\end{split}
\end{gather*}
Note that $K$ is bounded and $|b(x)|\le C(1+|x|^q)$ for some $q>0$.   
Together with \eqref{eq:conditionmoment}, this implies the first estimate in  \eqref{eq:conditionalincrements}.

For the second equation in \eqref{eq:conditionalincrements}, It\^o's formula implies that
\begin{multline*}
\frac{d}{dt}\mathbb{E}\left[|\tilde{X}^i(t)-\tilde{X}^i(t_{k-1})|^2|\mathcal{F}_{k-1} \right] \\
=2\mathbb{E}\left[ \Big(\tilde{X}^i(t)-\tilde{X}^i(t_{k-1})\Big)\cdot \left(b(\tilde{X}^i)+\frac{1}{p-1}\sum_{j\in \mathcal{C}_{\theta},j\neq i}m_jK_{ij}(\tilde{X}^i,\tilde{X}^{j}) \right)
\Big|\mathcal{F}_{k-1} \right]
+\sigma^2.
\end{multline*}

Note that
\begin{multline*}
\Big(\tilde{X}^i(t)-\tilde{X}^i(t_{k-1})\Big)\cdot b(\tilde{X}^i)
\le \beta |\tilde{X}^i(t)-\tilde{X}^i(t_{k-1})|^2
+\Big(\tilde{X}^i(t)-\tilde{X}^i(t_{k-1})\Big)\cdot b(\tilde{X}^i(t_{k-1}))\\
\le C|\tilde{X}^i(t)-\tilde{X}^i(t_{k-1})|^2+C(1+|\tilde{X}^i(t_{k-1})|^{q})
\end{multline*}

Hence,
\[
\frac{d}{dt}\mathbb{E}\left[|\tilde{X}^i(t)-\tilde{X}^i(t_{k-1})|^2|\mathcal{F}_{k-1} \right]
\le C\mathbb{E}\left[|\tilde{X}^i(t)-\tilde{X}^i(t_{k-1})|^2|\mathcal{F}_{k-1} \right]+C(1+|\tilde{X}^i(t_{k-1})|^{q}).
\]
The claim then follows.

\end{proof}

\section{Proof of Proposition \ref{pro:boundofderivatives}}\label{app:uderi}

{\bf Step 1--} Estimates of $\nabla_{x_i}u$.

Taking $\nabla_{x_{\ell}}$ in \eqref{eq:backward}, one has
\begin{gather}
\partial_t \nabla_{x_{\ell}}u=\mathcal{L}\nabla_{x_{\ell}}u+f_{\ell}(x)
\end{gather}
with
\begin{multline}\label{eq:fl}
f_{\ell}(x,t)=\left[\nabla_{x_{\ell}}b(x_{\ell})+\frac{1}{N-1}\sum_{j=1}^N
 m_j\nabla_{x_{\ell}}K_{\ell j}(x_{\ell}, x_j) \right]\cdot\nabla_{x_{\ell}}u \\
+\frac{1}{N-1}\sum_{j=1}^N m_{\ell}\nabla_{x_{\ell}}K_{j\ell}(x_j, x_{\ell})\cdot \nabla_{x_j}u.
\end{multline}
Here $\nabla_{x_{\ell}}K$ is a second order tensor and we 
use the convention that $(A\cdot v)_i:=\sum_j A_{ij}v_j$.

Hence,
\[
\nabla_{x_{\ell}}u(t)
=e^{t\mathcal{L}}\nabla_{x_{\ell}}u(0)
+\int_0^te^{(t-s)\mathcal{L}}f_{\ell}(x,s)\,ds.
\]
Since the semigroup $e^{t\mathcal{L}}$ is nonexpansive in $L^{\infty}$ as in \eqref{eq:semigroup}, we find
\[
\|\nabla_{x_{\ell}}u(t)\|_{\infty}
\le \|\nabla_{x_{\ell}}u(0)\|_{\infty}
+\int_0^t\|f_{\ell}(x,s)\|_{\infty}\,ds.
\]
However, by \eqref{eq:fl}, the linear transform from $[\nabla_{x_1}u, \cdots, \nabla_{x_N}u]$ to $[f_1, \cdots, f_N]$ is a block matrix with 
the $L^{\infty}$ norm bounded. This is because all the off-diagonal blocks are of order $O(\frac{1}{N})$. Hence,
\[
\|f_{\ell}(x,s)\|_{\infty}\le C\max_{\ell}\|\nabla_{x_{\ell}}u(t)\|_{\infty}=:a(t).
\]
Therefore,
\[
a(t)\le a(0)+C\int_0^ta(s)\,ds.
\]
This means
\[
\sup_{t\le T}a(t)\le C(T)a(0).
\]
Clearly since
\[
u(0)=\frac{1}{N}\sum_i \omega_i \varphi(x_i)
\Rightarrow \|\nabla_{x_i}u(0)\|_{\infty}\le C\frac{1}{N},
\]
the estimate for $\nabla_{x_i}u$  follows by Gr\"onwall's lemma.

{\bf Step 2--} Second order derivatives

We first compute the $\nabla_{x_i}^2$ derivatives (the $x_i$ Hessian).

\begin{gather}
\partial_t\nabla_{x_{\ell}}^2 u
=\mathcal{L}\nabla_{x_{\ell}}^2 u
+\tilde{f}_{\ell},
\end{gather}
with
\begin{gather}
\begin{split}
\tilde{f}_{\ell}= &2\nabla_{x_{\ell}}b\cdot\nabla_{x_{\ell}}^2u
+\frac{1}{N-1}\sum_j 2m_j \nabla_{x_{\ell}}K_{\ell j}(x_{\ell}, x_j)\cdot \nabla_{x_{\ell}}^2 u  \\
&+\frac{1}{N-1}\sum_{i: i\neq \ell} 2m_{\ell} \nabla_{x_{\ell}}K_{i \ell}(x_i, x_{\ell})\cdot \nabla_{x_i}\nabla_{x_{\ell}} u\\
&+
\left(2\nabla_{x_{\ell}}b+\frac{1}{N-1}\sum_j m_j\nabla_{x_{\ell}}^2K_{\ell j}(x_{\ell}, x_j)\right)\cdot\nabla_{x_{\ell}}u\\
&+\frac{1}{N-1}\sum_i m_{\ell}
\nabla_{x_{\ell}}^2K_{i\ell}(x_i, x_{\ell})\cdot\nabla_{x_i}u.
\end{split}
\end{gather}

Again, $\nabla_{x_i} b$ for a vector field $b$ is a second order tensor with $(\nabla_{x_i} b)_{rs}=\partial_{x_i^{(r)}}b^{(s)}$.  For the dot product between tensors: $A\cdot B$ means the contraction between the last index of $A$ and the first index of $B$. For example, we use the convention that
\[
(\nabla_{x_{\ell}}K_{i \ell}(x_i, x_{\ell})\cdot \nabla_{x_i}\nabla_{x_{\ell}} u)_{rs}
:=\sum_{q=1}^d \partial_{x_{\ell}^{(r)}}K_{i\ell}^{(q)}\partial_{x_i^{(q)}}
\partial_{x_{\ell}^{(s)}}u.
\]
Thus,
\begin{gather*}
\begin{split}
\tilde{f}_{\ell} =&\tilde{A}_{\ell}^1\cdot \nabla_{x_{\ell}}^2u
+\frac{1}{N-1}\sum_i \tilde{A}_{i\ell}^2\cdot\nabla_{x_i}\nabla_{x_{\ell}}u\\
&+\tilde{A}_{\ell}^3\cdot\nabla_{\ell} u
+\frac{1}{N-1}\sum_i \tilde{A}_{i\ell}^4\cdot\nabla_{x_i}u.
\end{split}
\end{gather*}
Here, all the $\tilde{A}$ tensors are bounded independent of $N$. There are $O(N)$ terms in the summation in the first line, and thus the linear transform  is again bounded in $L^{\infty}\to L^{\infty}$ independent of $N$.
The terms on the second line is clearly controlled 
as $C\frac{1}{N}$ by the estimates of $\nabla_{x_i}u$.

Similarly, one can compute for $\ell\neq q$ that
\begin{gather}\label{eq:mixedsecondderi}
\partial_t\nabla_{x_{\ell}}\nabla_{x_q}u
=\mathcal{L}\nabla_{x_{\ell}}\nabla_{x_q}u
+g_{\ell q}
\end{gather}
where the expression of $g_{\ell q}$ is complicated but it is of the following form
\begin{multline}\label{eq:glq}
g_{\ell q}=A_{\ell m}^1\cdot \nabla_{x_{\ell}}\nabla_{x_q}u
+\frac{1}{N-1}\sum_j (A_{j\ell}^2\cdot\nabla_{x_j}\nabla_{x_{\ell}}u
+A_{j q}^3\cdot\nabla_{x_j}\nabla_{x_{q}}u)\\
+\frac{1}{N-1}(m_{\ell}\nabla_{x_{\ell}}\nabla_{x_q}K_{\ell q}(x_{\ell}, x_q)\cdot\nabla_{\ell} u
+m_q\nabla_{x_q}\nabla_{x_{\ell}}K_{q\ell}(x_q, x_{\ell})\cdot\nabla_{x_q}u ).
\end{multline}
Here, $A^k$'s are second order tensors that are made up of the derivatives of $b, K_{ij}$, and thus bounded by constants independent of $N$. Note that there are $O(N)$ terms in the summation in the first line, and therefore the linear transform is again bounded in $L^{\infty}\to L^{\infty}$ independent of $N$.
The terms on the second line is clearly controlled 
as $C\frac{1}{N^2}$ by the estimates of $\nabla_{x_i}u$.

We first of all consider all the second order derivatives $\nabla_{x_{\ell}}\nabla_{x_q}$ where $\ell$ can be equal or not equal to $q$ .
By similar argument as for $\nabla_{x_i}u$, one can get the estimate
\begin{gather}
\|\nabla_{x_{\ell}}\nabla_{x_q}u(t)\|_{\infty}
\le \|\nabla_{x_{\ell}}\nabla_{x_q}u(0)\|_{\infty}
+C\int_0^t \max_{\ell, q} \|\nabla_{x_{\ell}}\nabla_{x_q}u(s)\|_{\infty}\,ds
+C(T)\frac{1}{N}.
\end{gather}
Moreover, 
\[
\|\nabla_{x_{\ell}}^2u(0)\|_{\infty}\le C\frac{1}{N}, ~~~
\|\nabla_{x_{\ell}}\nabla_{x_q}u(0)\|_{\infty}=0.
\]
Then, Gr\"onwall's inequality tells us that
\begin{gather}
\max_{\ell, q}\|\nabla_{x_{\ell}}\nabla_{x_q}u(t)\|_{\infty}\le C(T)\frac{1}{N}.
\end{gather}

Now, we focus on \eqref{eq:mixedsecondderi} for $\ell\neq q$.
To do this, we need to separate out the $j=\ell$ and $j=q$ terms
in \eqref{eq:glq}, which are of order $O(\frac{1}{N})$, and thus with the prefactor $1/(N-1)$, they contribute $O(1/N^2)$ to $g_{\ell q}$.
Hence, one has for $\ell\neq q$ that
\begin{gather}
\|\nabla_{x_{\ell}}\nabla_{x_q}u(t)\|_{\infty}
\le \|\nabla_{x_{\ell}}\nabla_{x_q}u(0)\|_{\infty}
+C\int_0^t \max_{\ell\neq q} \|\nabla_{x_{\ell}}\nabla_{x_q}u(s)\|_{\infty}\,ds
+C(T)\frac{1}{N^2}.
\end{gather}
Since $\|\nabla_{x_{\ell}}\nabla_{x_q}u(0)\|_{\infty}=0$ for $\ell\neq q$,
one then has
\begin{gather}
\max_{\ell\neq q}\|\nabla_{x_{\ell}}\nabla_{x_q}u(t)\|_{\infty}
\le C\int_0^t \max_{\ell\neq q} \|\nabla_{x_{\ell}}\nabla_{x_q}u(s)\|_{\infty}\,ds
+C(T)\frac{1}{N^2}.
\end{gather}
Gr\"onwall's lemma then tells us that
\begin{gather}
\max_{\ell\neq q}\|\nabla_{x_{\ell}}\nabla_{x_q}u(t)\|_{\infty}\le C\frac{1}{N^2}.
\end{gather}

{\bf Step  3--} Higher order derivatives.

The higher order derivatives can be similarly estimated using induction.
For these proofs, some derivatives that are not listed in Proposition \ref{pro:boundofderivatives} should be involved. For example, for third order derivatives, one should expect
\[
\|\nabla_{x_i}\nabla_{x_j}\nabla_{x_k}u\|_{\infty}=O(1/N^3)
\]
for distinct $i,j,k$. These proofs are tedious but the essential ideas are the same as we prove the claims for the Hessian. We omit the details.

\bibliographystyle{plain}
\bibliography{sdealg}

\end{document}